\newtheorem{theorem}{Theorem}[section]
\newtheorem{lemma}[theorem]{Lemma}
\theoremstyle{definition}
\numberwithin{equation}{section}
\newcommand{\GU}{\mathrm{GU}}
\newcommand{\SL}{\mathrm{SL}}
\newcommand{\SO}{\mathrm{SO}}
\newcommand{\SU}{\mathrm{SU}}
\newcommand{\SP}{\mathrm{Sp}}
\newcommand{\PSL}{\mathrm{PSL}}
\newcommand{\PSU}{\mathrm{PSU}}
\newcommand{\PSp}{\mathrm{PSp}}
\newcommand{\PGaO}{\mathrm{P\Gamma O}}
\newcommand{\POm}{\mathrm{P \Omega}}
\newcommand{\Sp}{\mathrm{Sp}}
\newcommand{\Aut}{\mathrm{Aut}}
\newcommand{\Out}{\mathrm{Out}}
\newcommand{\Inn}{\mathrm{Inn}}
\newcommand{\PG}{\mathrm{PG}}
\newcommand{\GAP}{\textsf{GAP}}
\newcommand{\F}{\mathbb{F}}
\newcommand{\Dmc}{\mathcal{D}}
\newcommand{\Pmc}{\mathcal{P}}
\newcommand{\Bmc}{\mathcal{B}}
\renewcommand{\leq}{\leqslant}
\renewcommand{\geq}{\geqslant}
\renewcommand{\mod}[1]{\ (\mathrm{mod}{\ #1})}
\newcommand{\imod}[1]{\allowbreak\mkern4mu({\operator@font mod}\,\,#1)}
\begin{document}
 \title{Symmetric designs and four dimensional projective special unitary groups}

 \author[S.H. Alavi]{Seyed Hassan Alavi}%
 \address{Seyed Hassan Alavi, Department of Mathematics, Faculty of Science, Bu-Ali Sina University, Hamedan, Iran.
 }%
 \email{alavi.s.hassan@basu.ac.ir and  alavi.s.hassan@gmail.com (G-mail is preferred)}
 \author[M. Bayat]{Mohsen Bayat}%
 \address{Mohsen Bayat, Department of Mathematics, Faculty of Science, Bu-Ali Sina University, Hamedan, Iran.}%
 \email{mohsen0sayeq24@gmail.com}
 \author[A. Daneshkhah]{Asharf Daneshkhah}%
 \thanks{Corresponding author: Asharf Daneshkhah}
 \address{Asharf Daneshkhah, Department of Mathematics, Faculty of Science, Bu-Ali Sina University, Hamedan, Iran.
 }%
  \email{adanesh@basu.ac.ir and daneshkhah.ashraf@gmail.com (G-mail is preferred)}
  \author[Sh. Zang Zarin]{Sheyda Zang Zarin}%
 \address{Sheyda Zang Zarin, Department of Mathematics, Faculty of Science, Bu-Ali Sina University, Hamedan, Iran.}%
 \email{zarinsheyda@gmail.com}

 \subjclass[]{05B05; 05B25; 20B25}%
 \keywords{Symmetric design,  flag-transitive, point-primitive, automorphism group, unitary groups}
 \date{\today}%
 \dedicatory{Dedicated to Cheryl E. Praeger on the occasion of her 70th birthday}%

\begin{abstract}
    In this article, we study symmetric $(v, k, \lambda)$ designs admitting a flag-transitive and point-primitive automorphism group $G$ whose socle is $\PSU_{4}(q)$. We prove that there exist eight non-isomorphic such designs for which $\lambda\in\{3,6,18\}$ and $G$ is either $\PSU_{4}(2)$, or $\PSU_{4}(2):2$.
\end{abstract}

\maketitle
A \emph{symmetric $(v,k,\lambda)$ design} is an incidence structure $\Dmc=(\Pmc,\Bmc)$  consisting of a set $\Pmc$ of $v$ \emph{points} and a set $\Bmc$ of $v$ \emph{blocks} such that every point is incident with exactly $k$ blocks, and every pair of blocks is incident with exactly $\lambda$ points. A \emph{nontrivial} symmetric design is one in which $2<k<v-1$.
A \emph{flag} of $\Dmc$ is an incident pair $(\alpha,B)$, where $\alpha$ and $B$ are a point and a block of $\Dmc$, respectively. An \emph{automorphism} of a symmetric design $\Dmc$ is a permutation of the points permuting the blocks and preserving the incidence relation. An automorphism group $G$ of $\Dmc$ is called \emph{flag-transitive} if it is transitive on the set of flags of $\Dmc$. If $G$ is primitive on the point set $\Pmc$, then $G$ is said to be \emph{point-primitive}. The \emph{complement} of a symmetric $(v,k,\lambda)$ design $\Dmc$ is the symmetric $(v,v-k,v-2k+\lambda)$ design  whose set of points is the same as the set of points of $\Dmc$ and whose blocks are the complements of the blocks of $\Dmc$, that is, incidence is replaced by non-incidence and vice versa.
We here adopt the standard notation as in \cite{b:BHR-Max-Low,b:Atlas} for finite simple groups of Lie type, for example, we use $\PSL_{n}(q)$, $\PSp_{n}(q)$, $\PSU_{n}(q)$, $\POm_{2n+1}(q)$ and $\POm_{2n}^{\pm}(q)$ to denote the finite classical simple groups. A group $G$ is said to be \emph{almost simple} with socle $X$ if $X\unlhd G\leq \Aut(X)$, where $X$ is a nonabelian simple group. Symmetric and alternating groups on $n$ letters are denoted by $S_{n}$ and $A_{n}$, respectively.  We denote by $n$ the cyclic group of order $n$, and we write $E_{n}$ for an elementary abelian group of order $n$. Further notation and definitions in both design theory and group theory are standard and can be found, for example in \cite{b:beth1999design,b:Atlas,b:Dixon,b:Hugh-design}. We also use the software \textsf{GAP} \cite{GAP4} for computational arguments.

The main aim of this paper is to study flag-transitive and point-primitive symmetric designs.
It is known that if a nontrivial symmetric $(v, k, \lambda)$ design $\Dmc$ with $\lambda \leq 100$ admits a flag-transitive and point-primitive automorphism group $G$, then $G$ must be an affine or almost simple group~\cite{a:Zhou-lam100}. Therefore, it is somehow interesting to study such designs whose automorphism group $G$ is an almost simple group with socle $X$ being a finite simple group of low rank. In this direction, we recently studied such symmetric designs for $X$ a finite exceptional simple group \cite{a:ABD-EXP}. In the case where $X$ is a sporadic simple group, there exist four possible parameters (see \cite{a:Zhou-lam-large-sporadic}). For finite classical groups $X$, in \cite{a:ABD-PSL2}, we proved that there are only five possible symmetric $(v,k,\lambda)$ designs (up to isomorphism) admitting a flag-transitive and point-primitive almost simple automorphism group $G$ with socle $X=\PSL_{2}(q)$, see also \cite{a:Zhou-PSL2-q}.  This study for $X:=\PSL_{3}(q)$ gives rise to only one nontrivial design  which is a Desarguesian projective plane $\PG_{2}(q)$ and $\PSL_{3}(q) \leq G$ (see \cite{a:ABD-PSL3}), however when $X:=\PSU_{3}(q)$, there is no such non-trivial symmetric designs for $q\geq 4$, see \cite{a:D-PSU3}. This paper is devoted to studying symmetric designs admitting a flag-transitive and point-primitive almost simple automorphism group $G$ whose socle is $\PSU_{4}(q)$.

\begin{theorem}\label{thm:main}
Let $\Dmc=(\Pmc,\Bmc)$ be a symmetric $(v,k,\lambda)$ design with $\lambda \geq 1$, and let $\alpha\in \Pmc$. Suppose that $G$ is an automorphism group of $\Dmc$ whose socle is $X:=\PSU_{4}(q)$ with $q=p^{f}$. If $G$ is flag-transitive and point-primitive with $H:=G_{\alpha}$, then $X=\PSU_{4}(2)$ and $\lambda=3$, $6$, or $18$, and $v$, $k$, $\lambda$ and $G$ are as in one of the lines of {\rm Table~\ref{tbl:main}}.
\end{theorem}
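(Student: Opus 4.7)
The plan is to combine the standard arithmetic constraints on a flag-transitive, point-primitive symmetric $(v,k,\lambda)$ design with the classification of maximal subgroups of the almost simple extensions of $\PSU_{4}(q)$. Since $G$ is point-primitive, $H=G_\alpha$ is a maximal subgroup of $G$, and $v=|G:H|$. Flag-transitivity forces $G_\alpha$ to act transitively on the $k$ blocks incident with $\alpha$, so $k$ divides $|H|$, and the replication-number identity for a symmetric design gives $k(k-1)=\lambda(v-1)$. Combining these with $k\leq |H|$ yields the key inequality
\[
|H|^{3}\;>\;\lambda(|G|-|H|),
\]
so that $|H|^{3}$ must be comparable to $\lambda|G|$. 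This, together with the divisibility $k\mid\gcd(|H|,\lambda(v-1))$, already eliminates most candidate subgroups.

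First I would list the maximal subgroups of $G$ according to Aschbacher's classes $\mathcal{C}_{1},\dots,\mathcal{C}_{8},\mathcal{S}$ using \cite{b:BHR-Max-Low}, writing $|H|$ and $v=|G:H|$ as explicit functions of $q$. For each family I would apply the inequality above to obtain a (usually small) upper bound on $q$ beyond which the family cannot contribute. Most classes whose orders are of low polynomial degree in $q$ (in particular typical representatives of $\mathcal{C}_{3}$, $\mathcal{C}_{5}$, $\mathcal{C}_{6}$, $\mathcal{C}_{8}$ and $\mathcal{S}$) are ruled out immediately for all but a few small values of $q$. The classes that require the most work are the stabilizers of non-degenerate and totally isotropic subspaces in $\mathcal{C}_{1}$ and the imprimitive subgroups in $\mathcal{C}_{2}$, whose orders are largest relative to $|X|$.

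For the surviving families I would proceed by a detailed arithmetic analysis: writing $v$ as a product of cyclotomic-type factors $(q^{a}-(-1)^{a})$, I would compare the prime factorisations of $v-1$ and $k(k-1)$, invoking Zsigmondy primitive prime divisors of $q^{i}-(-1)^{i}$ to force $k$ into a very narrow shape. The divisibility $k\mid|H|$ combined with $k^{2}-k=\lambda(v-1)$ then leads, in most sub-cases, to the non-existence of an integer solution $k$ once $q$ exceeds an explicit threshold. The hardest step is expected to be the parabolic (point stabilizer of a totally isotropic line) case in $\mathcal{C}_{1}$ and the non-degenerate hyperplane stabilizer, where $|H|^{3}/|G|$ is largest and the key inequality alone does not close the argument; there one needs a finer look at the tactical decomposition induced by the $G_\alpha$-orbits on $\Pmc$, together with refined congruence arguments on $v-1$ modulo small prime powers.

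Finally, the finitely many small values of $q$ left undecided by the generic analysis, and in particular the case $q=2$ where several exceptional maximal subgroups occur, are settled computationally with \textsf{GAP} \cite{GAP4}. For each such $q$, each almost simple extension $G$ of $\PSU_{4}(q)$, and each maximal subgroup $H\leq G$, I would compute $v=|G:H|$, enumerate the divisors $k$ of $|H|$ for which $k(k-1)/(v-1)$ is a positive integer $\lambda$, and then verify flag-transitivity of the resulting putative incidence structure. This is expected to leave exactly the eight non-isomorphic designs recorded in Table~\ref{tbl:main}, all of them arising from $X=\PSU_{4}(2)$ with $\lambda\in\{3,6,18\}$ and $G\in\{\PSU_{4}(2),\,\PSU_{4}(2){:}2\}$.
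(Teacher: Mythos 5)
Your overall strategy coincides with the paper's: reduce to maximal subgroups via point-primitivity, run through the Bray--Holt--Roney-Dougal list, use $k\mid |H|$, $k(k-1)=\lambda(v-1)$ and $\lambda v<k^{2}$ to bound $q$ in each family, and finish the small cases (and the construction/uniqueness of the eight designs with $X=\PSU_{4}(2)$) computationally. Your key inequality $\lambda(|G|-|H|)<|H|^{3}$ is exactly the tool the paper uses to kill the small subgroups (lines 11--16 of its table of maximal subgroups), and your plan for the $\mathcal{S}$-type and low-order geometric subgroups would go through.

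However, there is a genuine gap where you yourself locate the difficulty: the parabolic subgroups and the large reducible/imprimitive subgroups. For these, $|H|^{3}/|G|$ grows without bound, so the inequality gives no bound on $q$, and your proposed remedies --- Zsigmondy primes, ``refined congruence arguments on $v-1$'', and an unspecified ``finer look at the tactical decomposition'' --- are not an argument. The decisive fact you are missing is the subdegree criterion $k\mid\lambda d$ for every nontrivial subdegree $d$ of $G$, combined with specific knowledge of a small subdegree. For the parabolic cases the paper invokes the Liebeck--Saxl--Seitz theorem that a group of Lie type acting on the cosets of a maximal parabolic (outside a short list of exclusions that does not contain $\PSU_{4}(q)$) has a \emph{unique subdegree that is a power of $p$}; together with $k\mid\lambda(v-1)$ this forces $k\mid\lambda\gcd(p^{b},v-1)$, e.g.\ $k\mid\lambda q^{2}$ for the first parabolic, after which writing $mk=\lambda q^{2}$ pins down $k$ and $\lambda$ as explicit polynomials in $q$ and a tightly bounded integer $m$, and the divisibility $k\mid |\Out(X)|\cdot|H_{0}|$ closes the case. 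For the stabilizers of non-degenerate points and of the two decompositions in $\mathcal{C}_{2}$, the paper similarly imports explicit subdegree divisors (dividing $(q+1)(q^{3}+1)$, $2(q^{2}-1)^{2}$ and $2(q^{4}-1)$ respectively) from O'Reilly-Regueiro and Saxl; Tits' lemma ($p\mid v$ for non-parabolic $H$, hence $\gcd(p,v-1)=1$) is also needed. Without naming these inputs, your proposal does not actually eliminate the cases in which the counterexamples would most plausibly live, nor does it reach the one genuinely delicate surviving parameter set $(v,k,\lambda)=(41600,2448,144)$ for $\PSU_{4}(4)$, which survives all the arithmetic and must be excluded by a direct machine computation of base blocks.
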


\begin{table}
    \centering
    \scriptsize
    \caption{Symmetric designs admitting flag-transitive and point-primitive almost simple automorphism group with socle $\PSU_{4}(q)$.}\label{tbl:main}
    \begin{tabular}{p{3mm}p{2mm}p{2mm}p{2mm}p{10mm}llp{5cm}l}
        \hline\noalign{\smallskip}
        Line &
        \multicolumn{1}{c}{$v$} &
        \multicolumn{1}{c}{$k$} &
        \multicolumn{1}{c}{$\lambda$} &
        \multicolumn{1}{c}{$X$} &
        \multicolumn{1}{c}{$H$}  &
        \multicolumn{1}{c}{$G$}  &
        \multicolumn{1}{c}{Designs} &
        \multicolumn{1}{c}{References$^{\ast}$} \\
        \noalign{\smallskip}\hline\noalign{\smallskip}
        $1$ & $36$ & $15$ & $6$ & $\PSU_{4}(2)$  & $S_{6}$  & $\PSU_{4}(2)$
        & Menon
        &\cite{a:Braic-2500-nopower,a:rank3} \\
        $2$ & $36$ & $15$ & $6$ & $\PSU_{4}(2)$  & $S_{6}:2$  & $\PSU_{4}(2):2$
        & Menon
        &\cite{a:Braic-2500-nopower,a:rank3} \\
        $3$ & $40$ & $27$ & $18$ & $\PSU_{4}(2)$  & $3_{+}^{1+2}:2A_{4}$  & $\PSU_{4}(2)$
        &Complement of $\PG_{3}(3)$
        &\cite{a:Braic-2500-nopower,a:rank3} \\
        $4$ & $40$ & $27$ & $18$ & $\PSU_{4}(2)$  & $3_{+}^{1+2}:2A_{4}:2$  & $\PSU_{4}(2):2$
        &Complement of $\PG_{3}(3)$
        &\cite{a:Braic-2500-nopower,a:rank3} \\
        $5$ & $40$ & $27$ & $18$ & $\PSU_{4}(2)$  & $3^3:S_{4}$  & $\PSU_{4}(2)$
        & Complement of Higman design
        &\cite{a:Braic-2500-nopower,a:rank3} \\
        $6$ & $40$ & $27$ & $18$ & $\PSU_{4}(2)$  & $3^3:S_{4}:2$  & $\PSU_{4}(2):2$
        & Complement of Higman design
        &\cite{a:Braic-2500-nopower,a:rank3} \\
        $7$ & $45$ & $12$ & $3$ & $\PSU_{4}(2)$  & $2^{.}(A_{4}\times A_{4}).2$ & $\PSU_{4}(2)$
        &-
        &\cite{a:Braic-2500-nopower,a:rank3, a:Praeger-45-12-3}\\
        $8$ & $45$ & $12$ & $3$ & $\PSU_{4}(2)$  & $2^{.}(A_{4}\times A_{4}).2:2$ & $\PSU_{4}(2):2$
        &-
        &\cite{a:Braic-2500-nopower,a:rank3, a:Praeger-45-12-3}\\
        \noalign{\smallskip}\hline
        \multicolumn{9}{p{15cm}}{\tiny $\ast$ The last column addresses to references in which a design with the parameters in the line has been constructed.}
    \end{tabular}
\end{table}

\noindent \textbf{Comments on Table~\ref{tbl:main} }

\begin{description}
  \item[Lines 1-2] The symmetric $(36, 15, 6)$ designs are Menon design, that is  to say, symmetric designs with parameters $(4t^2,2t^2-t,t^2-t)$, for $t=3$. This designs can be constructed by orthogonal spaces. Let $(V, f )$ be a non-degenerate orthogonal space of dimension $2m+1$ over a finite field  $\F_{3}$ of size $3$ with discriminant $(-1)^m$, for $m>1$. The point set $\Pmc$ consists of all anisotropic $1$-dimensional subspaces $X =  \langle x \rangle \leq V$ satisfying $f (x,x)=1$, and blocks in $\Bmc$ have the form $B(X)=\{ Y\in \Pmc\mid  f (X,Y)=0\}$, for $X \in \Pmc$. Then $\Dmc=(\Pmc, \Bmc)$ is a symmetric design with parameters $(3^m(3^m -1)/2,3^{m-1}(3^m +1)/2,3^{m-1}(3^{m-1}+1)/2)$, $m>1$ (see~\cite{ a:Braic-2500-nopower,a:rank3}). This design is rank 3 with the full automorphism group $\POm_{2m+1}(3)$. If $m=2$, then we obtain the symmetric $(36, 15, 6)$ design with flag-transitive rank $3$ point-primitive automorphism group $\POm_{5}(3)\cong \PSU_{4}(2)$~\cite{a:Braic-2500-nopower,b:Atlas,a:rank3}.
  \item[Lines 3-4] These symmetric designs are the complement of the projective geometry  $\PG_{3}(3)$. The group $\PSp_{m+1}(q)$ with $m$ odd acts on $\PG_{m}(q)$ as a rank $3$ primitive group~\cite{a:rank3}.  For $m=3$ and $q=3$, we have the symmetric design $\PG_{m}(q)$ with parameters $(40, 13, 4)$ and rank $3$ point-primitive automorphism group $\PSp_{4}(3)\cong \PSU_{4}(2)$~\cite{a:Braic-2500-nopower,b:Atlas,a:rank3}. The complement of this design with parameters $(40, 27, 18)$ is flag-transitive.
  \item[Lines 5-6] These designs are orthogonal symmetric designs introduced by Higman \cite{a:Higman-rank3}, a series of designs with parameters $(\frac{q^{m+1}-1}{q-1}, \frac{q^{m}-1}{q-1}, \frac{q^{m-1}-1}{q-1} )$, where $m$ and $q$ are odd and $m\geq3$. Let $(V, f )$ be an orthogonal space of dimension $m+2$. In this design, the points are all isotropic $1$-dimensional subspaces and the blocks are of the form $B(X)=\{Y \in \Pmc \mid f(X,Y)=0\}$, for $X\in \Pmc$. The group $\PGaO_{m+2}(q)$ is an automorphism group of this design and the group $\PGaO_{m+2}(q)$ is its full automorphism group. For $q=3$ and $m=3$, we have the symmetric $(40, 13, 4)$ design with rank $3$ antiflag-transitive and point-primitive automorphism group $\POm_{5}(3)\cong \PSU_{4}(2)$~\cite{a:Braic-2500-nopower,b:Atlas,a:rank3}. Thus the complement of this design with parameters $(40, 27, 18)$ is flag-transitive.
  \item[Lines 7-8] It is shown in  ~\cite[Theorem 3.3]{a:Praeger-45-12-3} that, up to isomorphism, there is only one symmetric $(45, 12, 3)$ design with flag-transitive and point-primitive full automorphism group $\PSU_{4}(2):2$. This design can also be obtained from orthogonal space. Let $(V, f )$ be a non-degenerate orthogonal space of dimension $2m+1$ over $\F_{3}$ with discriminant $(-1)^m$, for $m>1$. The point set $\Pmc$ consists of all anisotropic $1$-dimensional subspaces $X =  \langle x \rangle \leq V$ satisfying $f (x,x)=-1$, and blocks have the form $B(X)=\{ Y\in \Pmc\mid  f (X,Y)=0\}$, for $X \in \Pmc$. Then $\Dmc=(\Pmc, \Bmc)$ is a symmetric with parameters $(3^m(3^m +1)/2,3^{m-1}(3^m -1)/2,3^{m-1}(3^{m-1}-1)/2)$ (see~\cite{ a:Braic-2500-nopower,a:rank3}). The design in these lines obtained when $m=2$ with flag-transitive and point-primitive automorphism group $\POm_{5}(3)\cong \PSU_{4}(2)$~\cite{a:Braic-2500-nopower,b:Atlas,a:rank3}.
\end{description}

Note that Praeger \cite[Theorem 3.3]{a:Praeger-45-12-3} proved that, up to isomorphism, there is the unique flag-transitive and point-primitive symmetric design with parameters $(45,12,3)$ whose full automorphism group is $\PSU_{4}(2):2$. It is worthy here to mention that she also proved that, up to isomorphism, there is exactly one flag-transitive and point-imprimitive symmetric design with this parameters, see \cite[Corollary 1.2]{a:Praeger-45-12-3}.


\section{Preliminaries}\label{sec:pre}

In this section, we state some useful facts in both design theory and group theory. Recall that a group $G$ is called almost simple if $X\unlhd G\leq \Aut(X)$, where $X$ is a nonabelian simple group. If $H$ is a maximal subgroup not containing the socle $X$ of an almost simple group $G$, then $G=HX$, and since we may identify $X$ with $\Inn(X)$, the group of inner automorphisms of $X$, we also conclude that $|H|$ divides $|\Out(X)|\cdot |X\cap H|$. This implies the following elementary and useful fact:

\begin{lemma}\label{lem:New}{\rm \cite[Lemma 2.2]{a:ABD-PSL3}}
Let $G$  be an almost simple group with socle $X$, and let $H$ be maximal in $G$ not containing $X$. Then
\begin{enumerate}[{\rm \quad (a)}]
  \item $G=HX$;
  \item $|H|$ divides $|\Out(X)|\cdot |X\cap H|$.
\end{enumerate}
\end{lemma}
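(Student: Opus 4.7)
The plan is to prove parts (a) and (b) in sequence, each using only elementary facts about normal subgroups and the isomorphism theorems, together with the defining property of an almost simple group.

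For part (a), I would first note that since $X$ is the socle of $G$, it is in particular a normal subgroup, so the product $HX$ is a subgroup of $G$. Because $H$ does not contain $X$ by hypothesis, the subgroup $HX$ strictly contains $H$; maximality of $H$ in $G$ then forces $HX=G$.

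For part (b), the idea is to apply the second isomorphism theorem to the subgroup $H$ and the normal subgroup $X$ of $G$. This yields
\[
H/(H\cap X)\;\cong\;HX/X\;=\;G/X,
\]
where the last equality is precisely part (a). In particular $|H|=|H\cap X|\cdot|G/X|$, so it suffices to show that $|G/X|$ divides $|\Out(X)|$. This is where the almost-simple hypothesis enters: identifying $X$ with $\Inn(X)$ inside $\Aut(X)$, the inclusion $X\unlhd G\leq\Aut(X)$ gives an injection
\[
G/X\;=\;G/\Inn(X)\;\hookrightarrow\;\Aut(X)/\Inn(X)\;=\;\Out(X),
\]
from which $|G/X|\mid |\Out(X)|$ and hence $|H|\mid|\Out(X)|\cdot|X\cap H|$, as required.

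There is really no substantive obstacle here; both parts are essentially immediate consequences of maximality and the second isomorphism theorem once the almost-simple structure is unpacked. The only point that merits explicit mention in the write-up is the identification of $X$ with $\Inn(X)$, which legitimises reading $G/X$ as a subgroup of $\Out(X)$; this is already flagged in the paragraph preceding the lemma, so the proof can be kept to just a few lines.
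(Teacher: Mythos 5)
Your proof is correct and follows the same route the paper sketches in the paragraph preceding the lemma: maximality of $H$ together with normality of $X$ gives $G=HX$, and the second isomorphism theorem combined with the identification of $X$ with $\Inn(X)$ yields the divisibility $|H|\mid|\Out(X)|\cdot|X\cap H|$. No gaps.
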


\begin{lemma}\label{lem:Tits}
Suppose that $\Dmc$ is a symmetric $(v,k,\lambda)$ design admitting a flag-transitive and point-primitive almost simple automorphism group $G$ with socle $X$ of Lie type. Suppose also that the point-stabiliser $G_{\alpha}$, not containing $X$, is not a parabolic subgroup of $G$. Then $\gcd(p,v-1)=1$.
\end{lemma}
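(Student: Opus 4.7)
The strategy is a direct appeal to the classical Tits Lemma: if $G$ is an almost simple group with socle $X$ of Lie type in characteristic $p$, and $H$ is a maximal subgroup of $G$ not containing $X$ with $p\nmid [G:H]$, then $H$ is a parabolic subgroup of $G$. This is the standard consequence of the Borel--Tits theorem for finite groups of Lie type, and is precisely the tool whose contrapositive is being asserted here.

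My first step is to translate the divisibility hypothesis into an index condition. Assume for contradiction that $p\mid v-1$; then $v\equiv 1\pmod{p}$, and in particular $p\nmid v=[G:G_\alpha]$. By point-primitivity, $G_\alpha$ is maximal in $G$, and by hypothesis $G_\alpha$ does not contain $X$. Hence all the hypotheses of Tits' Lemma are satisfied, and we conclude that $G_\alpha$ is a parabolic subgroup of $G$. This contradicts the standing hypothesis, and so $\gcd(p,v-1)=1$.

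The main obstacle, and the step requiring the most care in a fully detailed write-up, is making precise the invocation of Tits' Lemma in the almost-simple setting. Using $G=XG_\alpha$ from Lemma~\ref{lem:New}(a), one rewrites the index as $v=[X:X\cap G_\alpha]$; the condition $p\nmid v$ then means $X\cap G_\alpha$ contains a Sylow $p$-subgroup of $X$, so its $p$-core $U=O_p(X\cap G_\alpha)$ is a non-trivial unipotent subgroup of $X$. Because $U$ is characteristic in the normal subgroup $X\cap G_\alpha$ of $G_\alpha$, the group $G_\alpha$ normalises $U$; the Borel--Tits theorem then places $N_X(U)$ inside a proper parabolic subgroup $Q$ of $X$, and passing to $N_G(U)\leq N_G(Q)$ together with the maximality of $G_\alpha$ in $G$ yields $G_\alpha=N_G(Q)$, a parabolic subgroup of $G$, which completes the appeal.
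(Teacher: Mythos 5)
Your proof is correct and follows essentially the same route as the paper: the paper's entire argument is the single observation that, by Tits' Lemma \cite[1.6]{a:tits}, a maximal subgroup $G_{\alpha}$ not containing $X$ and not parabolic has index $v=|G:G_{\alpha}|$ divisible by $p$, whence $\gcd(p,v-1)=1$. Your contrapositive phrasing and the closing Borel--Tits elaboration merely unpack the content of the cited lemma (and the step ``$X\cap G_{\alpha}$ contains a Sylow $p$-subgroup, hence $O_p(X\cap G_{\alpha})\neq 1$'' is itself the nontrivial part of that lemma rather than an independent justification), so once Tits' Lemma is taken as given the two proofs coincide.
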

\begin{proof}
Note that $G_{\alpha}$ is maximal in $G$, then by Tits' Lemma \cite[1.6]{a:tits}, $p$ divides $|G:G_{\alpha}|=v$, and so  $\gcd(p,v-1)=1$.
\end{proof}

\begin{lemma}\label{lem:subdeg}{\rm \cite[3.9]{a:LSS1987}}
If $X$ is a group of Lie type in characteristic $p$, acting on the set
of cosets of a maximal parabolic subgroup, and $X$ is not $\PSL_n(q)$, $\POm_{2m}^{+}(q)$
(with $m$ odd) and $E_{6}(q)$, then there is a unique subdegree which is a power of $p$.
\end{lemma}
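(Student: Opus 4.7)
The approach I would take is via the Bruhat decomposition, reducing the question to a combinatorial statement about the Weyl group. Let $B\le P$ be a Borel subgroup of $X$ and let $(W,S)$ be its Weyl system, so that $X=\bigsqcup_{w\in W}BwB$ with $|BwB|/|B|=q^{\ell(w)}$. Maximality of $P$ lets us write $P=P_J$ with $J=S\setminus\{s\}$ for a single simple reflection $s$, and $P=BW_JB$, where $W_J=\langle J\rangle$.

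The $P$-orbits on $X/P$ are parametrised by the double cosets $W_J\backslash W/W_J$. Writing $w$ for the minimal-length representative of such a double coset, a standard Bruhat-decomposition calculation yields the subdegree in the form
\[
d_w \;=\; q^{\ell(w)}\cdot c_w,
\]
where the cofactor $c_w$ equals the index $[W_J:W_J\cap wW_Jw^{-1}]$ and can be verified (using that $W_J\cap wW_Jw^{-1}$ is itself a parabolic subgroup of $W_J$ whenever $w$ is a minimal double-coset representative) to be an integer coprime to $p$. Consequently $d_w$ is a power of $p$ if and only if $c_w=1$, i.e.\ $wW_Jw^{-1}=W_J$, so that the $p$-power subdegrees are enumerated by the cosets of $W_J$ in $N_W(W_J)$.

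It then remains to evaluate $N_W(W_J)/W_J$ for the maximal parabolic subsystem $J=S\setminus\{s\}$. A classical result in Coxeter theory (see Bourbaki, or Howlett's work on normalisers of parabolics) asserts that this quotient has order $1$ or $2$, the nontrivial class being represented by $w_*:=w_0w_0^J$ (longest elements of $W$ and $W_J$), and $w_*\in N_W(W_J)$ precisely when the opposition involution $-w_0$ fixes $s$. The opposition involution acts as the identity on $S$ for every irreducible finite Weyl group except of types $A_n$, $D_n$ with $n$ odd, and $E_6$, which correspond exactly to the excluded families $\PSL_n(q)$, $\POm_{2m}^+(q)$ with $m$ odd, and $E_6(q)$.

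For $X$ outside the excluded list we therefore have $|N_W(W_J)/W_J|=2$, producing exactly one nontrivial $p$-power subdegree; this is $d_{w_*}=q^{\ell(w_0)-\ell(w_0^J)}$, which one recognises as the order $|U|$ of the unipotent radical of $P$. The technical crux of the argument is the claim in paragraph two that the combinatorial factor $c_w$ is coprime to $p$: this requires an inspection of $W_J$-double cosets in the remaining Dynkin types, and is the step one would expect to be most delicate. Once that reduction is in hand, the statement follows from the classification of opposition involutions of irreducible finite Weyl groups together with the corresponding list of diagram symmetries of finite simple groups of Lie type, including the Steinberg twisted variants.
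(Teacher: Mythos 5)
The paper gives no proof of this lemma; it is quoted directly from Liebeck--Saxl--Seitz, so there is nothing internal to compare your argument against, and your strategy --- parametrising the $P_J$-suborbits by $W_J\backslash W/W_J$, characterising the $p$-power suborbits by the condition $w\in N_W(W_J)$, and then computing $N_W(W_J)/W_J$ via the opposition involution --- is the standard route and essentially the one behind the cited source. But the step you yourself identify as the crux is wrong as stated. The suborbit attached to the double coset with minimal-length representative $w$ is not $q^{\ell(w)}\,[W_J:W_J\cap wW_Jw^{-1}]$; it is
\[
d_w=\frac{\sum_{x\in W_JwW_J}q^{\ell(x)}}{\sum_{x\in W_J}q^{\ell(x)}}=q^{\ell(w)}\sum_{v}q^{\ell(v)},
\]
the sum running over the distinguished coset representatives $v$ of $W_L:=W_J\cap w^{-1}W_Jw$ in $W_J$. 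The cofactor is the Poincar\'e polynomial ($q$-analogue) of the index, not the index itself. This is not a cosmetic slip: the index of a parabolic subgroup of a Weyl group is often divisible by $p$ (already $[W(A_2):W(A_1)]=3$), so the coprimality you propose to verify ``by inspection of Dynkin types'' is false for the quantity you wrote down, and a cofactor that is itself a power of $p$ would destroy the equivalence ``$d_w$ is a $p$-power iff $c_w=1$''. With the correct cofactor no inspection is needed at all: $\sum_v q^{\ell(v)}$ has constant term $1$, hence is coprime to $p$, and it equals $1$ exactly when $L=J$, i.e.\ when $w$ normalises $W_J$.

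Two smaller gaps remain. The bound $|N_W(W_J)/W_J|\le 2$ for $J=S\setminus\{s\}$ is true but unproved in your sketch; the quickest argument is that the fixed space $V^{W_J}$ is the line spanned by the fundamental coweight $\omega_s$, that $N_W(W_J)$ acts on this line by $\pm1$, and that the kernel of this action is the stabiliser of $\omega_s$, which is exactly $W_J$; the nontrivial coset then occurs iff $-\omega_s\in W\omega_s$, i.e.\ iff the opposition involution fixes $s$. And the twisted groups --- which include $\PSU_{4}(q)={}^2A_3(q)$, the only case this paper actually uses --- are dismissed in a final clause; there one must work with the relative $(B,N)$-pair, whose Bruhat cells have sizes that are powers of $p$ but no longer of the form $q^{\ell(w)}$, and conclude by noting that every relative Weyl group of a twisted type ($B_m$, $F_4$, $G_2$, $A_1$, $I_2(8)$) contains $-1$, so the opposition involution is trivial there and these types never join the excluded list.
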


\begin{lemma}\label{lem:six}{\rm \cite[Lemma 2.1]{a:ABD-PSL2}}
Let $\Dmc$ be a symmetric $(v,k,\lambda)$ design, and let $G$ be a flag-transitive automorphism group of $\Dmc$. If $\alpha$ is a point in $\Pmc$ and $H:=G_{\alpha}$, then
\begin{enumerate}[\rm \quad (a)]
  \item $k(k-1)=\lambda(v-1)$;
  \item $4\lambda(v-1)+1$ is square;
  \item $k\mid |H|$ and $\lambda v<k^2$;
  \item $k\mid \gcd(\lambda(v-1),|H|)$;
  \item $k\mid \lambda d$, for all subdegrees $d$ of $G$.
\end{enumerate}
\end{lemma}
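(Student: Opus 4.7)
The plan is to verify the five parts in sequence using standard double-counting and the consequences of flag-transitivity. For (a), I would double-count ordered pairs $(\beta,B)$ with $\beta\in\Pmc\setminus\{\alpha\}$ and $\{\alpha,\beta\}\subseteq B$: fixing $\alpha$ and varying $\beta$ gives $\lambda(v-1)$, since each of the remaining $v-1$ points shares exactly $\lambda$ blocks with $\alpha$; fixing the $k$ blocks through $\alpha$ and summing their other $k-1$ points gives $k(k-1)$. Part (b) then follows because (a) presents $k$ as the positive root of $x^{2}-x-\lambda(v-1)=0$, so that the discriminant $1+4\lambda(v-1)$ must be a perfect square.

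For (c), let $\Bmc_{\alpha}$ denote the set of $k$ blocks incident with $\alpha$. Flag-transitivity of $G$ forces $H$ to act transitively on $\Bmc_{\alpha}$, whence $k\mid |H|$ by orbit--stabiliser. Using (a) to rewrite $\lambda v = \lambda(v-1)+\lambda = k(k-1)+\lambda = k^{2}-k+\lambda$, the desired inequality $\lambda v<k^{2}$ reduces to $\lambda<k$, which is the standard consequence of $k<v$ combined with $\lambda(v-1)=k(k-1)$ (equivalently, $\lambda/k=(k-1)/(v-1)<1$). Part (d) is then immediate: (a) gives $k\mid \lambda(v-1)$ and (c) gives $k\mid |H|$, so $k$ divides their greatest common divisor.

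For (e), let $O$ be an $H$-orbit on $\Pmc\setminus\{\alpha\}$ of length $d$. Since $H$ acts transitively on $\Bmc_{\alpha}$ and preserves $O$, the intersection size $|B\cap O|$ is independent of the choice of $B\in\Bmc_{\alpha}$; denote this common value by $m$. Double-counting the pairs $(\beta,B)$ with $B\in\Bmc_{\alpha}$ and $\beta\in B\cap O$ then yields $km=\lambda d$, the right-hand side because each $\beta\in O$ lies in exactly $\lambda$ blocks together with $\alpha$. Hence $k\mid \lambda d$, as required. I expect no genuine obstacle in this lemma; the only subtlety is in (e), namely the simple observation that both $O$ and $\Bmc_{\alpha}$ are $H$-invariant, so that intersections are permuted consistently under the $H$-action on blocks.
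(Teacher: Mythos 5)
Your proof is correct and complete: all five parts are the standard double-counting arguments, and the only point needing care --- that in (e) the $H$-transitivity on the $k$ blocks through $\alpha$ together with the $H$-invariance of the suborbit $O$ forces $|B\cap O|$ to be constant, giving $km=\lambda d$ --- is handled properly. The paper itself offers no proof of this lemma (it is quoted from Lemma 2.1 of the cited $\PSL_{2}(q)$ paper), and your derivation is exactly the expected one, so there is nothing to reconcile.
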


If a group $G$ acts primitively on a set $\Pmc$ and $\alpha\in \Pmc$ (with $|\Pmc|\geq 2$), then the point-stabiliser $G_{\alpha}$ is maximal in $G$ \cite[Corollary 1.5A ]{b:Dixon}. Therefore, in our study, we need a list of all maximal subgroups of almost simple group $G$ with socle $X:=\PSU_{4}(q)$. Note that if $H$ is a maximal subgroup of $G$, then $H_{0}:=H\cap X$ is not necessarily maximal in $X$ in which case $H$ is called a \emph{novelty}. By~\cite[Tables 8.10 and 8.11]{b:BHR-Max-Low}, the complete list of maximal subgroups of an almost simple group $G$ with socle $\PSU_{4}(q)$ are known, and in this case, there arise only three novelties.

\begin{lemma}\label{lem:maxes}
Let $G$ be a group such that $\PSU_{4}(q)\lhd G \leq \Aut(X)$, and let $H$ be a maximal subgroup of $G$ not containing $X=\PSU_{4}(q)$ and $d=\gcd(4,q+1)$. Then $X\cap H$ is (isomorphic to) one of the subgroups listed in {\rm Table~\ref{tbl:maxes}}.
\end{lemma}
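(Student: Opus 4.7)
The lemma is essentially a transcription of Tables 8.10 and 8.11 of Bray--Holt--Roney-Dougal~\cite{b:BHR-Max-Low}, so my plan is to apply Aschbacher's theorem in the form made explicit there and read off the result. The genuine work is to make sure the three novelty subgroups flagged in the excerpt are correctly incorporated and that each family's congruence conditions on $q$ are recorded.

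First I would invoke Aschbacher's theorem: any maximal subgroup $H$ of the almost simple group $G$ with socle $X=\PSU_{4}(q)$ that does not contain $X$ has $H\cap X$ lying in one of the eight geometric classes $\mathcal{C}_{1},\dots,\mathcal{C}_{8}$ (stabilisers of subspaces, direct sum decompositions, extension-field structures, tensor decompositions, subfield forms, classical form subgroups, and so on), or else $H\cap X$ lies in the class $\mathcal{S}$ of almost simple ``non-geometric'' subgroups. For low-dimensional classical groups this classification has been carried out completely: for $X=\PSU_{4}(q)$ the full list up to $G$-conjugacy appears in Table 8.10 of~\cite{b:BHR-Max-Low}, and the additional novelty subgroups (maximal in $G$ but whose intersection with $X$ is not maximal in $X$) appear in Table 8.11.

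The bookkeeping then proceeds class by class. Class $\mathcal{C}_{1}$ yields the two parabolics $P_{1}$ and $P_{2}$ together with the non-degenerate subspace stabilisers of types $\GU_{1}(q)\times\GU_{3}(q)$ and $\GU_{2}(q)\wr S_{2}$; $\mathcal{C}_{2}$ contributes the imprimitive subgroup of type $\GU_{1}(q)\wr S_{4}$; $\mathcal{C}_{3}$ yields the extension-field subgroups $\GU_{2}(q^{2})$ and the Singer-type $\GU_{1}(q^{4})$ after factoring by the scalars of order $d=\gcd(4,q+1)$; class $\mathcal{C}_{5}$ contributes the subfield group $\PSU_{4}(q_{0})$ (with $q=q_{0}^{r}$, $r$ an odd prime) and the form-type subgroups $\PSp_{4}(q)$ and $\mathrm{SO}_{4}^{\pm}(q)$; and class $\mathcal{S}$ gives small sporadic examples such as $A_{7}$, $\PSL_{2}(7)$ and $\PSU_{4}(2)$ for particular values of $q$. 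Throughout, $d$ enters as the order of the centre of $\SU_{4}(q)$ and controls the orders of the intersections with $X$.

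The only real obstacle is managing the three novelties from Table 8.11, whose presence and exact shape depend on which outer automorphisms (field, graph, diagonal) actually lie in $G$; each must be compared against the $X$-class it sits inside, the two $X$-conjugate overgroups must be identified, and the congruences on $q$ guaranteeing maximality recorded. Once these novelties are merged into the geometric list and the conditions on $q$ for each row are compiled, one obtains exactly the table claimed in the lemma.
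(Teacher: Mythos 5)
Your proposal is correct and takes essentially the same route as the paper, whose entire proof consists of citing Tables 8.10 and 8.11 of Bray--Holt--Roney-Dougal; the Aschbacher-class bookkeeping you sketch is background to that citation rather than additional argument. (Two minor slips in your enumeration, neither of which affects validity since the list is read off from the cited tables: $\PSp_{4}(q)$ and $\SO_{4}^{\pm}(q)$ lie in class $\mathcal{C}_{8}$ rather than $\mathcal{C}_{5}$, and you omit the $\mathcal{C}_{6}$ extraspecial normalisers $(4\circ 2^{1+4})\cdot S_{6}$ and $(4\circ 2^{1+4})\cdot A_{6}$ that appear in lines 11--12 of the table.)
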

\begin{proof}
The maximal subgroups $H$ of $G$ can be read off from~\cite[Tables 8.10 and 8.11]{b:BHR-Max-Low}.
\end{proof}
\begin{table}[h]
  \scriptsize
  \centering
  \caption{Maxiamal subgroups $H$ of almost simple groups with socle $X=\PSU_{4}(q)$.}\label{tbl:maxes}
\begin{tabular}{cp{4cm}p{5cm}}
  \noalign{\smallskip}\hline\noalign{\smallskip}
   Line& $H\cap X$ & Comments   \\
  \hline\noalign{\smallskip}
  $1$ & $^{\hat{}}E_{q}^{1+4}:\SU_{2}(q):(q^{2}-1)$ &
  \\
  $2$ & $^{\hat{}}E_{q}^{4}:\SL_{2}(q^{2}):(q-1)$ &   \\
  $3$ & $^{\hat{}}\GU_{3}(q)$ &
  \\
  $4$ & $^{\hat{}}(q+1)^{3}:S_{4}$ & novelty if $q=3$
  \\
  $5$ & $^{\hat{}}\SU_{2}(q)^{2}:(q+1)\cdot 2$ & $q\geq 3$
  \\
  $6$ & $^{\hat{}}\SL_{2}(q^{2})\cdot (q-1)\cdot 2$ & $q\geq 4$, novelty if $q=3$
  \\
  $7$ & $^{\hat{}}\SU_{4}(q_{0})$ & $q=q_{0}^r$ and $r$ odd prime
  \\
  $8$ & $^{\hat{}}\SP_{4}(q)\cdot \gcd(2,q+1)$ &
  \\
  $9$ & $^{\hat{}}{\SO_{4}^{+}}(q)\cdot d$ & $q\geq 5$ odd
  \\
  $10$ & $^{\hat{}}{\SO_{4}^{-}}( q)\cdot d$ & $q$ odd
  \\
  $11$ & $^{\hat{}}(4\circ 2^{1+4})^{\cdot}S_{6}$ & $p=q\equiv 7 \mod 8$ \\
  $12$ & $^{\hat{}}(4\circ 2^{1+4})\cdot A_{6}$  & $p=q\equiv 3 \mod 8$ \\
  $13$ & $^{\hat{}}d\circ 2^{\cdot}\PSL_{2}(7)$ & novelty, $q=p\equiv 3, 5, 6 \mod 7$, $q\neq 3$ \\
  $14$ & $^{\hat{}}d\circ 2^{\cdot}A_{7}$ & $q=p\equiv 3, 5, 6 \mod 7$ \\
  $15$ & $^{\hat{}}{4_{2}^{\cdot}}\PSL_{3}(4)$ &  $q=3$\\
  $16$ & $^{\hat{}}d\circ 2^{\cdot}\PSU_{4}(2)$ & $q=p\equiv 5 \mod 6$ \\
  \hline\noalign{\smallskip}
  \multicolumn{3}{l}{Note: $d:=\gcd(4,q+1)$}
\end{tabular}

\end{table}

\section{\bf Proof of the main result}\label{sec:proof}

In this section, suppose that $\Dmc$ is a nontrivial symmetric $(v, k, \lambda)$ design and  $G$ is an almost simple automorphism group $G$ with simple socle $X:=\PSU_{4}(q)$, where  $q = p^a$ with $p$ prime, that is to say, $X\lhd G \leq \Aut(X)$. Suppose also that $V=\F_{q}^{4}$ is the underlying vector space of $X$ over the finite field $\F_{q}$.

Let now $G$ be a flag-transitive and point-primitive automorphism group of $\Dmc$. Then the point-stabiliser $H := G_{\alpha}$ is maximal in $G$ \cite[Corollary 1.5A]{b:Dixon}. Set $H_{0}:= H\cap X$. Then by Lemma~\ref{lem:maxes}, the subgroup $H_{0} $ is (isomorphic to) one of the subgroups as in Table~\ref{tbl:maxes}. Moreover, by Lemma~\ref{lem:New},
\begin{align}
   v=\frac{|X|}{|H_{0}|}=\frac{q^6(q^2-1)(q^3+1)(q^4-1)}{\gcd(4, q+1)\cdot |H_{0}|},\label{eq:v}
\end{align}
Note that $|\Out(X)|=2a\cdot \gcd(4,q+1)$. Therefore, by Lemmas~\ref{lem:New}(b) and~\ref{lem:six}(c),
\begin{align}\label{eq:k-out}
  k \mid 2a\cdot \gcd(4,q+1) \cdot |H_{0}|.
\end{align}

We now consider all possibilities for the subgroup $H_{0}$ as in Table~\ref{tbl:maxes}, and prove that the only possible cases are those have been listed in Table~\ref{tbl:main}.

\begin{lemma}\label{lem:su2}
If $H_{0} =$ $^{\hat{}}E_{q}^{1+4}:\SU_{2}(q):(q^{2}-1)$, then $q=2$ and $(v, k, \lambda)=(45, 12, 3)$.
\end{lemma}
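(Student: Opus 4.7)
The plan is to exploit the rank-$3$ structure of this parabolic action. Since $H_{0}$ stabilises a totally isotropic $1$-subspace of $V=\F_{q^{2}}^{4}$, the index formula yields
\[
v \;=\; (q^{2}+1)(q^{3}+1), \qquad v-1 \;=\; q^{2}(q^{3}+q+1),
\]
and $X$ acts on these $v$ points as the natural rank-$3$ action on the Hermitian generalised quadrangle $H(3,q^{2})$. A fixed point $P$ lies on exactly $q+1$ totally isotropic $2$-spaces, each containing $q^{2}$ further points, so the two non-trivial subdegrees are $d_{1}=q^{2}(q+1)$ (collinear points) and $d_{2}=v-1-d_{1}=q^{5}$ (non-collinear points); the latter agrees with the unique power-of-$p$ subdegree predicted by Lemma~\ref{lem:subdeg}. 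Applying Lemma~\ref{lem:six}(e) to both subdegrees and using $\gcd(q+1,q^{3})=1$ gives
\[
k \;\mid\; \lambda\,\gcd(d_{1},d_{2}) \;=\; \lambda q^{2}.
\]

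Writing $\lambda q^{2}=km$ and substituting into Lemma~\ref{lem:six}(a) yields the parametrisation
\[
k \;=\; m(q^{3}+q+1)+1, \qquad \lambda \;=\; \frac{m\bigl(m(q^{3}+q+1)+1\bigr)}{q^{2}}.
\]
Reducing modulo $q^{2}$ (via $q^{3}+q+1\equiv q+1$) shows that integrality of $\lambda$ is equivalent to $q^{2}\mid m\bigl(m(q+1)+1\bigr)$. Because the two factors are coprime, either $q^{2}\mid m$ -- which forces $k\ge q^{2}(q^{3}+q+1)+1=v$, contradicting non-triviality -- or $q^{2}\mid m(q+1)+1$. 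Combined with $(q+1)^{-1}\equiv 1-q\pmod{q^{2}}$, the second case forces $m\equiv q-1\pmod{q^{2}}$, and since $m<q^{2}$ we must have $m=q-1$. This produces the unique candidate
\[
k \;=\; q^{2}(q^{2}-q+1), \qquad \lambda \;=\; (q-1)(q^{2}-q+1).
\]

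The final step is to use $k\mid|H|$ from Lemma~\ref{lem:six}(c) to eliminate $q\ge 3$. By Lemma~\ref{lem:New}(b) we have $|H|\mid|H_{0}|\cdot|\Out(X)|=q^{6}(q^{2}-1)^{2}\cdot 2a$. Since $\gcd(q^{2}-q+1,q)=1$ and $\gcd(q^{2}-q+1,q^{2}-1)$ divides $3$ (from $q^{2}-q+1=(q+1)(q-2)+3$), the divisibility $k\mid|H|$ collapses to
\[
\frac{q^{2}-q+1}{\gcd(q^{2}-q+1,\,9)} \;\Bigm|\; 2a.
\]
A short case-by-case check confirms that for every $q\ge 3$ the left-hand side strictly exceeds $2a$, which gives a contradiction. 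Only $q=2$ survives, where $m=q-1=1$ yields $(v,k,\lambda)=(45,12,3)$ with $k=12\mid|H_{0}|=576$, matching Line~$7$ of Table~\ref{tbl:main}. The main obstacle is the geometric identification of the two non-trivial subdegrees $q^{2}(q+1)$ and $q^{5}$; once $d_{1}$ and $d_{2}$ are in hand, the remainder is routine arithmetic and divisibility analysis.
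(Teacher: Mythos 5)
Your proof is correct and follows essentially the same route as the paper: derive $k\mid\lambda q^{2}$ from subdegree divisibility, parametrise $k=m(q^{3}+q+1)+1$, force $m=q-1$ from the integrality of $\lambda$, and eliminate $q\geq 3$ via $k\mid |\Out(X)|\cdot|H_{0}|$. The only cosmetic difference is that you obtain $k\mid\lambda q^{2}$ from the two explicit rank-$3$ subdegrees $q^{2}(q+1)$ and $q^{5}$ of the action on the Hermitian generalised quadrangle, whereas the paper combines $k\mid\lambda(v-1)$ with the abstract power-of-$p$ subdegree supplied by Lemma~\ref{lem:subdeg}.
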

\begin{proof}

In this case, $|H_0|=q^6(q^2-1)^2/\gcd(4,q+1)$, and so  by~\eqref{eq:v}, we have that $v=q^5+q^3+q^2+1$. Then by Lemma \ref{lem:six}(a), $k$ divides $\lambda(v-1)=\lambda q^2(q^3+q+1)$. It follows from Lemma \ref{lem:subdeg} that $G$ has a subdegree $p^b$ of prime power $p$, and so by Lemma~\ref{lem:six}(e),  we conclude that $k$ divides $\lambda p^b$. Hence $k$ divides $\lambda \gcd(p^b,v-1)=\lambda\gcd(p^{b},q^2(q^3+q+1))$, and since $p^b$ divides $q^6$, it follows  that $k$ divides $\lambda q^2$. Let now $m$ be a positive integer such that $mk = \lambda q^2$. Since $\lambda<k$, we have that
\begin{align}\label{eq:case-1-m}
  m<q^2.
\end{align}
By Lemma~\ref{lem:six}(a), $k(k-1)=\lambda(v-1)$, and so
\begin{align*}
  \frac{\lambda q^2}{m}(k-1) = \lambda(q^5+q^3+q^2).
\end{align*}
Thus,
\begin{align}
  k= m(q^3+q+1)+1 \quad \text{and} \quad
\lambda =m^2q+\frac{m^2(q+1)+m}{q^2}.\label{eq:case1-lam}
\end{align}
Since $\lambda$ is integer, (\ref{eq:case1-lam}) implies that
\begin{align}\label{eq:case1-2}
q^2\mid m^2(q+1)+m.
\end{align}
It is easy to know that $\gcd(q^{2},m)=1$, and so $q^{2}$ divides $m(q+1)+1$. Let $n$ be a positive integer such that $m(q+1)+1=nq^{2}$. Then
\begin{align*}
  m=\frac{nq^2-1}{q+1}=n(q-1)+\frac{n-1}{q+1}.
\end{align*}

If $n\neq 1$, then $q+1$ would divide $n-1$, and so $n\geq  q+2$. Note by~\eqref{eq:case-1-m} that $nq^2=m(q+1)+1<q^2(q+1)+1$ which implies that $n\leq q+1 $, which is a contradiction. Therefore, $n=1$, and hence  $m=q-1$. It follows from~\eqref{eq:case1-lam} that $k=q^2(q^2-q+1)$. By~\eqref{eq:k-out}, $k$ divides $2aq^5(q-1)^2$. Therefore, $q^2-q+1$ must divide $2a(q^2-1)^2$. Since $\gcd(q^2-q+1, q-1)=1$ and $\gcd(q^2-q+1, q^2+2q+1)$ divides $3$, $q^2-q+1$ must divide $6a$.
This holds only when $q=2$ in which case $v=45$, $k=12$ and $\lambda=3$.  By~\cite[Theorem 3.3]{a:Praeger-45-12-3}, this design is unique (up to isomorphism) with full automorphism group $\PSU_{4}(2):2$.
\end{proof}

\begin{lemma}\label{lem:sl2}
The subgroup $H_{0}$ cannot be $^{\hat{}}E_{q}^{4}:\SL_{2}(q^{2}):(q-1)$.
\end{lemma}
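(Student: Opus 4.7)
The plan is to mimic Lemma~\ref{lem:su2}, adapted to the fact that $H_{0}$ is a parabolic. First, I would compute $v=|X{:}H_{0}|$ using $|H_{0}|=q^{6}(q-1)(q^{4}-1)/d$ (with $d=\gcd(4,q+1)$); equation~\eqref{eq:v} then yields $v=(q+1)(q^{3}+1)$, so $v-1=q(q^{3}+q^{2}+1)$ has $p$-part exactly $q$ since $\gcd(q^{3}+q^{2}+1,p)=1$.

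Next, Lemma~\ref{lem:Tits} is not available here (because $H_{0}$ is parabolic), but Lemma~\ref{lem:subdeg} still applies: $\PSU_{4}(q)$ is not on the exceptional list, so there is a subdegree equal to a power of $p$. Combining this with parts (a) and (e) of Lemma~\ref{lem:six} in the same manner as in the proof of Lemma~\ref{lem:su2}, one obtains $k\mid\lambda q$. Writing $mk=\lambda q$ with $1\le m<q$ (the bounds following from $1\le\lambda<k$) and substituting into $k(k-1)=\lambda(v-1)$ produces
\[
\lambda=\frac{m^{2}(q^{3}+q^{2}+1)+m}{q},
\]
whose integrality reduces modulo $q$ to the condition $q\mid m(m+1)$. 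Since $\gcd(m,m+1)=1$ and $q=p^{a}$ is a prime power, all of the $p$-content of $q$ must go into a single factor, and the bound $m<q$ forces $m=q-1$. This gives $k=q(q^{3}-q+1)$ and $\lambda=(q-1)(q^{3}-q+1)$.

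To close, I would invoke the divisibility bound~\eqref{eq:k-out}, which here reads $k\mid 2a q^{6}(q-1)(q^{4}-1)$. Cancelling the coprime factor $q$ reduces this to $q^{3}-q+1\mid 2a(q-1)^{2}(q+1)(q^{2}+1)$. The main obstacle is the gcd bookkeeping in this last divisibility: reducing modulo $q\pm 1$ gives $\gcd(q^{3}-q+1,\,q\pm 1)=1$, and the congruence $q^{3}-q+1\equiv 1-2q\pmod{q^{2}+1}$ together with a Euclidean step shows $\gcd(q^{3}-q+1,\,q^{2}+1)\mid 5$. Consequently $q^{3}-q+1\mid 10a$, which fails for every $q=p^{a}\ge 2$: one checks $q\in\{2,3\}$ directly (since $7\nmid 10$ and $25\nmid 10$), and for $q\ge 4$ the cubic left side dominates the logarithmic bound $10a\le 10\log_{2}q$. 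This contradiction rules out the case at hand.
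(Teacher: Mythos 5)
Your proposal is correct and follows essentially the same route as the paper: the same prime-power subdegree argument gives $k\mid\lambda q$, the integrality of $\lambda$ forces $m=q-1$, hence $k=q(q^{3}-q+1)$, and the contradiction comes from $k$ dividing $2a\,q^{6}(q-1)(q^{4}-1)$. Your endgame is in fact slightly sharper (you reduce to $q^{3}-q+1\mid 10a$ via the $\gcd(q^{3}-q+1,q^{2}+1)\mid 5$ observation, versus the paper's $q^{3}-q+1\mid 8a(q^{2}+2q)$), and your value $\lambda=(q-1)(q^{3}-q+1)$ corrects a slip in the paper, which states $\lambda=q^{3}-1$.
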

\begin{proof}
Here $|H_0|=d^{-1}q^6(q^4-1)(q-1)$, where $d=\gcd(4,q+1)$. According to ~\eqref{eq:v}, we have that $v=q^4+q^3+q+1$. Note by  Lemma \ref{lem:six}(a) that $k$ divides $\lambda(v-1)=\lambda q(q^3+q^2+1)$. Moreover, by Lemma \ref{lem:subdeg} and Lemma~\ref{lem:six}(e), $k$ divides $\lambda p^b$, where $p^b$ is a prime power subdegree of $G$. Therefore $k$ divides $\lambda \gcd(p^b,v-1)=\lambda\gcd(p^{b},q(q^3+q^2+1))$, and since $p^b$ divides $q^6$, it follows that $k$ divides $\lambda q$. If $m$ is a positive integer such that $mk = \lambda q$, then since $\lambda<k$, we have that
\begin{align}\label{eq:case-2-m}
  m<q.
\end{align}
By Lemma~\ref{lem:six}(a), $k(k-1)=\lambda(v-1)$, and so
\begin{align*}
  \frac{\lambda q}{m}(k-1) = \lambda(q^4+q^3+q).
\end{align*}
Thus,
\begin{align}
  k= m(q^3+q^2+1)+1 \quad \text{and} \quad
\lambda = m^2(q^2+q)+\frac{m^2+m}{q}.\label{eq:case2-lam}
\end{align}
It follows from \eqref{eq:case2-lam} that $q\mid m^2+m$.  It is easy to know that $\gcd(q,m)=1$, and so $q$ divides $m+1$. By \eqref{eq:case-2-m}, we conclude that $m=q-1$, and hence $k=q(q^3-q+1)$ and $\lambda=q^3-1$ by \eqref{eq:case2-lam}. Note by Lemma~\ref{lem:six}(c) that $k$ divides $8aq^6(q^4-1)(q-1)$. Then $q^3-q+1$ must divide $8a(q^4-1)(q-1)$. Since $\gcd(q^3-q+1, q-1)=1$,  $q^3-q+1$ must divide $8a(q^3+q^2+q+1)$. Therefore $q^3-q+1$ divides $8a(q^2+2q)$, which is impossible.
\end{proof}


\begin{lemma}\label{lem:GU3}
If $H_{0}$ is $^{\hat{}}\GU_{3}(q)$, then $q=2$ and $(v, k, \lambda)=(40, 27, 18)$.
\end{lemma}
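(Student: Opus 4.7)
The plan is to begin by computing $v$. Since $H_{0}$ is realised in $\PSU_{4}(q)$ as the stabiliser of a non-isotropic $1$-space $\langle u\rangle$ in the $4$-dimensional non-degenerate Hermitian space $V=\F_{q^{2}}^{4}$ with form $h$, one has $|H_{0}|=(q+1)q^{3}(q^{2}-1)(q^{3}+1)/d$. Substituting into \eqref{eq:v} gives $v=q^{3}(q-1)(q^{2}+1)$, and a short polynomial division yields $v-1=NM$, where $N:=q^{2}-q+1$ and $M:=q^{4}-q-1$.

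Next I identify the non-trivial subdegrees of $G$ on $\Pmc$. By Witt's extension theorem, two non-isotropic $1$-spaces $\langle u\rangle$, $\langle w\rangle$ lie in the same $\SU_{4}(q)$-orbit precisely when they share the same ``angle'' $\xi(\langle w\rangle):=h(u,w)\overline{h(u,w)}/(h(u,u)h(w,w))\in\F_{q}$ and the same degeneracy type of $\langle u,w\rangle$. Counting vectors of prescribed norm in $\langle u\rangle^{\perp}$ (using transitivity of $\GU_{3}(q)$ on vectors of fixed non-zero norm), the non-trivial $H_{0}$-orbits on $\Pmc\setminus\{\alpha\}$ have lengths $q^{2}(q^{2}-q+1)$ (when $\langle w\rangle\subseteq\langle u\rangle^{\perp}$, $\xi=0$), $(q^{2}-1)(q^{3}+1)$ (when $\langle u,w\rangle$ is degenerate, $\xi=1$ with $\langle u\rangle\neq\langle w\rangle$), and $q^{2}(q^{3}+1)$ (one orbit for each $\xi\in\F_{q}\setminus\{0,1\}$, a total of $q-2$ orbits). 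Their greatest common divisor equals $N$, so Lemma~\ref{lem:six}(e) yields $k\mid\lambda N$. Combined with $k(k-1)=\lambda NM$ this forces $M\mid k-1$; write $k=mM+1$ and $\lambda=km/N$ with $1\leq m\leq N-1$. Using $M\equiv -(2q+1)\pmod{N}$, the integrality of $\lambda$ becomes $N\mid m\bigl(1-m(2q+1)\bigr)$.

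For $q=2$ one has $N=3$ and $M=13$, and only $m=2$ satisfies this congruence in the admissible range, producing $(v,k,\lambda)=(40,27,18)$, as claimed. For $q\geq 3$ I then aim to derive a contradiction from $k\mid |H|$. By Lemma~\ref{lem:New}(b), $k$ divides $2aq^{3}(q-1)(q+1)^{3}N$, whereas the residues $k\equiv 1-m\pmod{q}$, $k\equiv 1-m\pmod{q-1}$, $k\equiv m+1\pmod{q+1}$ and $k\equiv 1-m(2q+1)\pmod{N}$, together with $\gcd(k,M)=1$ (as $k\equiv 1\pmod{M}$), tightly constrain the $p$-part, the $(q\pm 1)$-parts and the $N$-part of $k$. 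The lower bound $k\geq M+1=q^{4}-q$ then forces a prime divisor of $k$ coprime to $2aq(q^{2}-1)N$, which is impossible.

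The main obstacle is to make this last exclusion uniform over all $q\geq 3$ and over all admissible residues of $m$ modulo $N$. Since $4N=(2q+1)(2q-3)+7$, there are at most $\gcd(2q+1,N)\in\{1,7\}$ such residues; for each I anticipate an explicit arithmetic check in the spirit of Lemmas~\ref{lem:su2}--\ref{lem:sl2} that exhibits the required prime divisor of $k$ that does not divide $|H|$.
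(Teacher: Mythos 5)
Your setup is correct and in one respect more self-contained than the paper's: you compute the nontrivial $H_{0}$-orbit lengths $q^{2}N$, $(q^{2}-1)(q+1)N$ and $q-2$ orbits of length $q^{2}(q+1)N$ directly (they do sum to $v=q^{3}(q-1)(q^{2}+1)$ and have greatest common divisor $N=q^{2}-q+1$), whereas the paper only cites a subdegree dividing $(q+1)(q^{3}+1)$ and intersects with $v-1$; both routes give $k\mid\lambda N$, hence $k=mM+1$ with $M=q^{4}-q-1$, $\lambda=mk/N$ and $m<N$, and your treatment of $q=2$ is correct. The genuine gap is that the elimination of all $q\geq 3$ --- which is essentially the entire content of the paper's proof --- is only announced, not carried out. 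You say the residue constraints ``tightly constrain'' the various parts of $k$ and that you ``anticipate an explicit arithmetic check''; that check is precisely where the difficulty lies, and it is not clear your strategy closes it. A size comparison alone cannot: $k\leq(N-1)M+1<q^{6}$ while $k$ is only known to divide $2aq^{3}(q-1)(q+1)^{3}N$, a quantity of order $q^{9}$, so nothing forces an extraneous prime divisor of $k$ without substantial further work. The paper's proof needs several nontrivial stages here: first $q^{2}\nmid k$ (an argument with an auxiliary integer $n$), then an explicit polynomial identity yielding $m\leq 33$, then $q\nmid k$ via the finite list \eqref{eq:GU3-ap-1}, then a second polynomial identity reducing to $q\in\{2,3,4,5,7,8,9,16,32\}$, and finally a finite check. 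None of this, nor an equivalent uniform argument, appears in your proposal.

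A secondary error sits inside the deferred step: the claim that the congruence $N\mid m\bigl(1-(2q+1)m\bigr)$ has at most $\gcd(2q+1,N)\in\{1,7\}$ admissible residues is false. Even when $\gcd(2q+1,N)=1$ there is the nonzero solution $m\equiv(2q+1)^{-1}\pmod{N}$ (e.g.\ $q=4$, $N=13$, $m\equiv 3$), and when $N$ is composite the Chinese Remainder Theorem produces up to $2^{\omega(N)}$ solution classes (e.g.\ $q=5$, $N=21$, $m\in\{2,9,14\}$). Since your endgame proposes a case analysis over these residues, the case list itself is unbounded as stated, so the concluding contradiction for $q\geq 3$ is not established.
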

\begin{proof}
Let $\{u_1,u_2,u_3,u_4\}$ be a canonical basis for the underlying unitary space $V$. In this case, $H=G_U$, where $U$ is a $1$-dimensional non-degenerate subspace, say $U=\langle u_1\rangle $. Then $|H_0|=q^3(q^2-1)(q^3+1)(q+1)/\gcd(4,q+1)$ which implies by \eqref{eq:v} that $v=q^{3}(q-1)(q^{2}+1)$. Let now $W:=\langle u_{1}, u_{2} \rangle$. Then $G$ has a subdegree $|G_U:G_{U,W}|$ dividing $(q+1)(q^{3}+1)$ (see \cite[p. 549]{a:reg-classical} and \cite[p. 336]{a:Saxl2002}). Therefore Lemma~\ref{lem:six}(d) implies that $k$ must divide $\lambda (q+1)(q^{3}+1)$. On the other hand, $k$ divides $\lambda(v-1)=\lambda(q^{2}-q+1)(q^{4}-q-1)$. Therefore, $k$ divides $\lambda(q^{2}-q+1)$, and so $mk=\lambda(q^{2}-q+1)$, for some positive integer $m$. Then
\begin{align}\label{eq:GU3-m}
  m<q^{2}-q+1.
\end{align}
By Lemma~\ref{lem:six}(a), we have that  $k(k-1)=\lambda(v-1)$, and so
\begin{align}
  k= m(q^4-q-1)+1.\label{eq:GU3-k}
\end{align}

We first show that $q^2$ does not divide $k$. If $q^2$ would divide $k$, then by~\eqref{eq:GU3-k}, $q^2$ should divide $m(q+1)-1$. Let now $n$ be a positive integer such that $m(q+1)-1=nq^{2}$. Then
\begin{align*}
  m=\frac{nq^2+1}{q+1}=n(q-1)+\frac{n+1}{q+1}.
\end{align*}
Therefore, $q+1$ must divide $n+1$, and so $n\geq q$. Note by \eqref{eq:GU3-m} that $nq^2=m(q+1)-1<(q^2-q+1)(q+1)-1=q^3$. Thus $n \leq q-1$, which is a contradiction. Therefore, $q^2$ does not divide $k$.\smallskip

Note by Lemma~\ref{lem:New}(b) that $k$ divides $2ag(q)$, where $g(q)=q^3(q+1)(q^2-1)(q^3+1)$.
Since $k$ is not a multiple of $q^2$, we must have $k\mid 2ag_{1}(q)$, where $g_{1}(q)=g(q)/q=q^2(q+1)(q^2-1)(q^3+1)$. Then, by~\eqref{eq:GU3-k}, we must have
\begin{align}\label{eq:GU3-q2}
m(q^4-q-1)+1\mid 2ag_{1}(q).
\end{align}
Let now $d(q)=q^3+q^2-4q-3$ and $h(q)=q^4+q^3-q^2+q+3$. Then $2ah(q)[m(q^4-q-1)+1]-2mag_{1}(q)=2mad(q)+2ah(q)$, and so~\eqref{eq:GU3-q2} implies that $m(q^4-q-1)+1$ divides $2mad(q)+2ah(q)$. Thus $m[q^4-q-1-2ad(q)]+1\leq 2ah(q)$. Note that $m\leq 2ah(q)/[q^4-q-1-2ad(q)]\leq 33$, for $q\neq 4$. Therefore, \eqref{eq:GU3-m} implies that $m\leq \min\{33, q^2-q+1\}$, for all $q=p^a$.\smallskip

We now show that $q$ does not divide $k$. If $q$ would divide $k$, then by~\eqref{eq:GU3-k}, $q$ should divide $m-1$. As $m\leq \min\{33, q^2-q+1\}$, it follows that $q\leq 32$. Therefore,
\begin{align}\label{eq:GU3-ap-1}
  \begin{array}{llll}
    p =2, & \quad a\leq 5; \\
    p =3, & \quad a\leq 3; \\
    p =5, & \quad a\leq 2; \\
    p =7,  11, 13, 17, 19, 23, 29, 31,& \quad a= 1.
 \end{array}
\end{align}
For the pairs $(p,a)$ as in \eqref{eq:GU3-ap-1}, since $m\leq \min\{33, q^2-q+1\}$ and $m\equiv 1 \mod q$, the parameter $k= m(q^4-q-1)+1$ does not divide $2ag_{1}(q)$, which is a contradiction. Therefore, $k$ is not a multiple of $q$. Again applying Lemmas~\ref{lem:New}(b) and \eqref{eq:GU3-k}, we have that
\begin{align}\label{eq:GU3-1}
  m(q^4-q-1)+1\mid 2ag_{2}(q),
\end{align}
where $g_{2}(q)=g(q)/q^2=q(q+1)(q^2-1)(q^3+1)$. If $d_{1}(q)=3q^3-q^2-q+1$ and $h_{1}(q)=q^3+q^2-q+1$, then $2ah_{1}(q)[m(q^4-q-1)+1]-2mag_{2}(q)=2a[md_{1}(q)-h_{1}(q)]$.
It follows from \eqref{eq:GU3-1} that $m(q^4-q-1)+1$ divides $2a[md(q)-h(q)]$, and so $q^4-q-1<2a[|d(q)|+|h(q)|]$. This inequality holds only for $q \in \{2, 3, 4, 5, 7, 8, 9, 16, 32\}$. For these values of $q$, as $k= m(q^4-q-1)+1$ divides $2ag_{2}(q)$, for $m\leq \min\{33, q^2-q+1\}$, we conclude that $q=2$ in which case $v=40$, $k=27$ and $\lambda=18$. It follows from~\cite{a:Braic-2500-nopower,a:rank3} that the design $\Dmc$ is the complement of $\PG(3,3)$ with parameters $(40,13,4)$ and  flag-transitive and point-primitive automorphism group $\PSU_{4}(2)$ or~$\PSU_{4}(2):2$.
\end{proof}


\begin{lemma}\label{lem:q+1:s4}
If $H_{0}$ is $^{\hat{}}(q+1)^3:S_{4}$, then $q=2$ and $(v, k, \lambda)=(40, 27, 18)$.
\end{lemma}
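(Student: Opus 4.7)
The plan is to compute $v$, use Tits' lemma to control $\gcd(p, v-1)$, bound $|H|$ via the outer automorphism group, turn the Fisher-type inequality $\lambda v < k^{2}$ into a bound on $q$, and then eliminate the remaining finitely many values of $q$.

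From $|H_{0}| = 24(q+1)^{3}/d$ with $d = \gcd(4, q+1)$, formula~\eqref{eq:v} gives $v = q^{6}(q-1)^{2}(q^{2}+1)(q^{2}-q+1)/24$. Since $(q+1)^{3}{:}S_{4}$ is not a parabolic subgroup of $X$, Lemma~\ref{lem:Tits} yields $\gcd(p,v-1)=1$. Lemma~\ref{lem:New}(b) implies $k \mid |\Out(X)|\cdot|H_{0}| = 48a(q+1)^{3}$, so $k \leq 48a(q+1)^{3}$. Combining this with Lemma~\ref{lem:six}(c) and $\lambda \geq 1$ I obtain
\[
 q^{6}(q-1)^{2}(q^{2}+1)(q^{2}-q+1) \;<\; 24\,[48a(q+1)^{3}]^{2} \;=\; 55296\, a^{2}(q+1)^{6}.
\]
The left side has degree $12$ in $q$ and the right side has degree $6$, so this inequality forces $q$ to lie in a short finite list. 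A direct numerical check leaves the candidates $q \in \{2,3,4,5,7,8,9\}$.

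For each such $q$ I would enumerate the divisors $k$ of $48a(q+1)^{3}$ with $\sqrt{v}<k<v$, compute $\lambda := k(k-1)/(v-1)$, and keep only those triples with $\lambda$ a positive integer less than $k$, with $4\lambda(v-1)+1$ a perfect square (Lemma~\ref{lem:six}(b)), and satisfying the further divisibility constraints in Lemma~\ref{lem:six}. For $q=2$ one has $v=40$, $v-1=39$, and the divisors of $1296=48(q+1)^{3}$ in $(\sqrt{40},40)$ give the unique solution $k=27$, $\lambda=18$; for the remaining six values of $q$ the arithmetic leaves no admissible pair $(k,\lambda)$. The resulting design at $q=2$ is the complement of $\PG_{3}(3)$ or of the Higman design, already described in Lines~3--6 of Table~\ref{tbl:main}.

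The main obstacle is the individual verification for $q \in \{7,8,9\}$, where the lists of divisors of $48a(q+1)^{3}$ are longer; rather than enumerating by hand one may prefer a sharper divisibility shortcut (for instance, using $24(v-1)\equiv -216(q+1)\pmod{(q+1)^{2}}$ to force large prime-power factors of $q+1$ out of $k$), or delegate the residual cases to the \textsf{GAP} computations announced in the introduction.
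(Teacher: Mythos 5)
Your argument is correct and follows essentially the same route as the paper: compute $v=q^{6}(q-1)^{2}(q^{2}+1)(q^{2}-q+1)/24$, bound $k$ by $48a(q+1)^{3}$ via Lemma~\ref{lem:New}(b), use $\lambda v<k^{2}$ to reduce to finitely many $q$, and finish with a divisor enumeration. The only difference is that the paper first invokes the known classifications for $\lambda\leq 3$ to assume $\lambda\geq 4$, which shrinks the candidate list to $q\in\{2,3,4,5,8\}$ and removes the cases $q=7,9$ you flag as an obstacle; in any event those cases fall at once, since the only divisor of $48a(q+1)^{3}$ exceeding $\sqrt{v}$ is $48a(q+1)^{3}$ itself (respectively $24576$ and $96000$), and it fails the integrality of $k(k-1)/(v-1)$.
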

\begin{proof}
In this case, $|H_0|=24d^{-1}(q+1)^3$, where $d=\gcd(4,q+1)$. Then by~\eqref{eq:v}, we have $v=q^6(q-1)^2(q^2+1)(q^2-q+1)/24 $, and since $|\Out (X)|= 2a\cdot\gcd(4, q+1)$, it follows from (\ref{eq:k-out}) that $k$ divides $48a(q+1)^3$. By \cite{a:reg-classical,a:Zhou-lam3-classical} and Lemma~\ref{lem:six}(c), we may assume that $\lambda$ is at least $4$, and so
\begin{align*}
  \frac{q^6(q-1)^2(q^2+1)(q^2-q+1)}{6}\leq \lambda v< k^2 \leq 48^2a^2(q+1)^6.
\end{align*}
This implies that $q^6(q-1)^2(q^2+1)(q^2-q+1)<13824 a^2 (q+1)^6$. Thus
\begin{align*}
  \frac{q^6(q-1)^2(q^2+1)(q^2-q+1)}{(q+1)^6}<13824 a^2 .
\end{align*}
This inequality is true only when $q \in \{2,3,4,5,8\}$. Since $k$ is a divisor of $48a(q+1)^3$, for each such $q=p^a$, the possible values of $k$ and $v$ are listed in Table~\ref{tbl:(q+1)3:S4}.
\begin{table}[h]
\centering\scriptsize
  \caption{Possible value for $k$ and $v$ when $q \in \{2,3,4,5,8\}$.}
  \label{tbl:(q+1)3:S4}
  \begin{tabular}{clllll}
   \noalign{\smallskip}\hline\noalign{\smallskip}
    $q$ & $2$ & $3$ & $4$ & $5$ & $8$ \\
    \hline\noalign{\smallskip}
    $v$ & $40$ & $8505$ & $339456$ & $5687500$ & $1982955520$ \\
    $k$ divides & $1296$ & $3072$ & $12000$ & $10368$ & $104976$\\
   \hline\noalign{\smallskip}
  \end{tabular}
\end{table}

The only possible parameters $(v,k,\lambda)$ satisfying  $\lambda < k<v-1$ and $\lambda(v-1)=k(k-1)$ is $(v, k, \lambda)=(40, 27, 18)$ when $q=2$. By~\cite{a:Braic-2500-nopower,a:rank3}, the design $\Dmc$ is the Higman design with parameters $(40,13,4)$ and flag-transitive and point-primitive automorphism group $\PSU_{4}(2)$ or $\PSU_{4}(2):2$.
\end{proof}

\begin{lemma}\label{lem:SU22:(q+1)}
The subgroup $H_{0}$ cannot be  $^{\hat{}}\SU_{2}(q)^{2}:(q+1)\cdot 2$, for $q\geq 3$.
\end{lemma}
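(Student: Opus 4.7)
The plan is to suppose for contradiction that $H_{0}={}^{\hat{}}\SU_{2}(q)^{2}:(q+1)\cdot 2$ with $q\geq 3$, and derive a contradiction from the divisibility constraints on the design parameters. First, I would substitute $|H_{0}|=2q^{2}(q-1)^{2}(q+1)^{3}/\gcd(4,q+1)$ into formula~\eqref{eq:v} to obtain
\[
v=\frac{q^{4}(q^{2}+1)(q^{2}-q+1)}{2}.
\]
Since $H$ is not a parabolic subgroup of $G$, Lemma~\ref{lem:Tits} yields $\gcd(p,v-1)=1$, and hence $\gcd(q,v-1)=1$.

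Next, from Lemma~\ref{lem:six}(a) and~\eqref{eq:k-out}, $k$ divides both $\lambda(v-1)$ and $4aq^{2}(q^{2}-1)^{2}(q+1)$. Because $\gcd(v-1,p)=1$, the $p$-part $k_{p}$ of $k$ must divide $\lambda$. Writing $k=k_{p}k_{p'}$ and $\lambda=k_{p}m$ with $m\geq 1$, the identity $k(k-1)=\lambda(v-1)$ reduces to $k_{p'}(k-1)=m(v-1)$. Setting $e=\gcd(k_{p'},v-1)$ and using the coprimality $\gcd(k_{p'}/e,(v-1)/e)=1$, this forces $(v-1)/e\mid k-1$, so every candidate $k$ takes the form
\[
k=1+\frac{n(v-1)}{e},\qquad \lambda=\frac{kn}{e},
\]
for some integer $n$ with $1\leq n<e$, where $e\mid\gcd(|H|,v-1)$.

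The decisive step is to bound $\gcd(|H|,v-1)$ sharply. For this I would use three observations: (i) since $q^{2}-q+1$ is odd and divides $2v$, $(q^{2}-q+1)\mid v$, so $\gcd(v-1,q^{2}-q+1)=1$; (ii) $(q^{2}+1)/\gcd(2,q^{2}+1)\mid v$, so $\gcd(v-1,q^{2}+1)\leq 2$; and (iii) the direct calculation $2v\equiv 6\pmod{q+1}$ shows that every odd prime divisor of $q+1$ is coprime to $v-1$. A short analysis via $q^{2}\equiv 1\pmod{8}$ for odd $q$ bounds the $2$-part of $\gcd(v-1,(q+1)^{3})$ by an absolute constant. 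Together with $\gcd(v-1,q)=1$, these yield an estimate of the form $\gcd(|H|,v-1)\leq C\,a\,(q-1)^{2}$ for a small numerical constant~$C$.

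Finally, combining $k=1+n(v-1)/e\leq|H|\leq 4aq^{2}(q^{2}-1)^{2}(q+1)$ with the bound on $e$ and with $v-1$ of order $q^{8}$ produces Diophantine inequalities admitting only finitely many admissible $(q,a,e,n)$; each remaining candidate $k$ is then tested against the exact divisibility $k\mid|H|$, and the few unresolved small $q$ are eliminated by explicit enumeration, using \textsf{GAP} where convenient. The hardest part will be making the bound on $\gcd(|H|,v-1)$ uniform across both characteristics and tight enough that the resulting case list is short, rather than relying on a large exhaustive machine search.
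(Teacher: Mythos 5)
Your setup is sound, and your route to the key divisibility is genuinely different from the paper's. Where the paper invokes the subdegree $|H:H_{y}|$ dividing $2(q^{2}-1)^{2}$ (via Lemma~\ref{lem:six}(e) and the Saxl/O'Reilly-Regueiro computation) to conclude $k\mid 2\lambda(q^{2}-1)^{2}$ and then $k\mid 8\lambda(q-1)^{2}$ after intersecting with $v-1$, you reach a comparable conclusion using only Lemma~\ref{lem:Tits} together with $k\mid\gcd(\lambda(v-1),|H|)$ and a direct analysis of $\gcd(v-1,(q+1)^{3})$, $\gcd(v-1,(q-1)^{2})$ and $\gcd(v-1,q)$. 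That analysis is correct (your congruence $2v\equiv 6\pmod{q+1}$ matches the paper's $2(v-1)=(q+1)(\cdots)+4$), and your order $|H_{0}|=2q^{2}(q-1)^{2}(q+1)^{3}/\gcd(4,q+1)$ is the right one --- the paper's displayed $2d^{-1}q^{2}(q^{2}-1)^{2}(q-1)$ is a typo, as its own value of $v$ confirms. The only cost of your route is an extra factor of $a$ in the bound on $e$, which the subdegree argument avoids; this is harmless.

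The genuine gap is at the step where you claim that $k=1+n(v-1)/e\leq|H|$, together with the bound on $e$, ``produces Diophantine inequalities admitting only finitely many admissible $(q,a,e,n)$.'' It does not: with $v-1\approx q^{8}/2$, $e\leq Ca(q-1)^{2}$ and $|H|\leq 4aq^{2}(q^{2}-1)^{2}(q+1)\approx 4aq^{7}$, the inequality reads roughly $q^{6}/(2Ca)\leq 4aq^{7}$, which holds for every $q$ and every $a$, so no finite list emerges and your concluding ``explicit enumeration'' has nothing finite to enumerate. The finiteness has to be extracted from the divisibility itself: from $k\mid 4aq^{2}(q^{2}-1)^{2}(q+1)$ and $k=1+n(v-1)/e$ one gets $n(v-1)+e\mid 4ae\,q^{2}(q^{2}-1)^{2}(q+1)$, and one must carry out the polynomial division of the right-hand side by $n(v-1)+e$ and show the remainder is a nonzero quantity of degree at most $7$ in $q$; only then does $n(v-1)+e\leq(\text{remainder})$ force $q$ below an explicit bound in terms of $a$. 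This is precisely the computation the paper performs with its auxiliary polynomials $d(q)=80q^{7}-64q^{6}-\cdots$ and $h(q)=32q$, arriving at $(q^{2}-q+1)(q^{2}+1)<160aq^{3}$ and hence at the finite list in Table~\ref{tbl:case5-ap}. That computation is the technical heart of the proof and is absent from your proposal; until it is supplied (and the nonvanishing of the remainder checked), the argument does not close.
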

\begin{proof}
In this case, $H$ preserves a decomposition $V = V_{1}\bigoplus V_{2}$ of nonsingular subspaces $V_{1}=\langle u_1,u_2\rangle$ and $V_{2}=\langle u_3,u_4\rangle$.  Take  the partition $y:=\{\langle  u_{1},u_{3} \rangle,\langle u_{2}, u_{4} \rangle\}$. Then the subdegree $|H:H_y|$ of $G$ divides $2(q^{2}-1)^2$ (see \cite[p. 550]{a:reg-classical} and \cite[pp. 336-337]{a:Saxl2002}). Thus by Lemma~\ref{lem:six}(e), we conclude that $k$ divides $2\lambda (q^{2}-1)^2$. Note in this case that $|H_0|=2d^{-1}q^2(q^2-1)^2(q-1)$, where $d=\gcd(4,q+1)$. By~\eqref{eq:v}, we have that $v=q^4(q^2-q+1)(q^2+1)/2$.

Since $2(v-1)=(q+1)(q^7+2q^5+q^4+2q^3+2q^2+2q+2)+4$, we conclude that $\gcd(v-1, q+1)$ divides $2$. Note also that $q-1$ divides $v-1$. Therefore, $\gcd(v-1,2(q^{2}-1)^2)$ divides $8(q-1)^2$. Since $k$ divides $\lambda\gcd(v-1,2(q^{2}-1)^2)$, we conclude that $k$  divides $\lambda f(q)$, where $f(q)=8(q-1)^2$. Thus $mk = \lambda f(q)$, for some positive integer $m$. Since $k(k-1)=\lambda(v-1)$ and $\lambda<k$, it follows that
\begin{align}\label{eq:case5-1}
  k= \frac{m(v-1)}{f(q)}+1,
\end{align}
where $f(q)=8(q-1)^2$ and
\begin{align}\label{eq:case2-su2-m}
  m<8(q-1)^2.
\end{align}
Note by (\ref{eq:k-out}) that $k\mid 4ag(q)$, where $g(q)=q^2(q-1)^2(q+1)^3$. Then, by~\eqref{eq:case5-1}, we must have
\begin{align}\label{eq:case5-2}
m(v-1)+f(q)\mid 4af(q)g(q).
\end{align}
Let now $d(q)=80q^7-64q^6-32q^5+48q^4+16q^3-16q^2-32q$ and $h(q)=32q$. Then
\begin{align*}
  ah(q)[m(v-1)+f(q)]-4am f(q)g(q)= mad(q)+af(q)h(q).
\end{align*}
Therefore, (\ref{eq:case5-2}) implies that
\begin{align*}
 m(v-1)+f(q)&\leq a(md(q)+f(q)h(q))
\end{align*}
So $q^4(q^2-q+1)(q^2+1)<2a[d(q)+f(q)h(q)]$. Since also $d(q)+f(q)h(q)< 80q^{7}$, for all $q\geq 2$. Therefore, $(q^2-q+1)(q^2+1)<160 a q^3$. This inequality holds only for pairs $(p,a)$ as in Table~\ref{tbl:case5-ap} below:
\begin{table}[h]
\centering\scriptsize
   \caption{Some parameters for Lemma~\ref{lem:SU22:(q+1)}\label{tbl:case5-ap}}
  \begin{tabular}{cccccccccc} \noalign{\smallskip}\hline\noalign{\smallskip}
  $p$ &
  $2$ &
  $3$ &
  $5$ &
  $7$ &
  $11$, $13$, $17$ &
  $19$, \ldots, $157$ \\
  \hline\noalign{\smallskip}
  $a\leq $ &
  $10$ &
  $6$ &
  $4$ &
  $3$ &
  $2$ &
  $1$ \\
  \hline\noalign{\smallskip}
\end{tabular}
\end{table}

For these values of $q=p^a$, and the parameter $m$ as in \eqref{eq:case2-su2-m}, there is no parameter $k$ satisfying \eqref{eq:case5-1} for which the fraction $k(k-1)/(v-1)$ is a positive integer, which is a contradiction.
\end{proof}


\begin{lemma}\label{lem:SL2:(q-1).2}
The subgroup $H_{0}$ cannot be $^{\hat{}}\SL_{2}(q^{2}):(q-1)\cdot 2$.
\end{lemma}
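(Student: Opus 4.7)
My plan follows the template of Lemmas \ref{lem:sl2}, \ref{lem:GU3} and \ref{lem:SU22:(q+1)}. First, from $|H_0|=2d^{-1}q^2(q^4-1)(q-1)$ with $d=\gcd(4,q+1)$, formula (\ref{eq:v}) gives
$$v=\frac{q^4(q+1)(q^3+1)}{2}=\frac{q^4(q+1)^2(q^2-q+1)}{2},$$
and the bound (\ref{eq:k-out}) together with $|\Out(X)|=2a\cdot d$ yields $k\mid 4a\,q^2(q^4-1)(q-1)$.

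Next I would force $k$ to divide $\lambda$ times a polynomial in $q$ of small degree. Lemma \ref{lem:subdeg} is not directly available because $H$ is of $\mathcal{C}_3$ type (field extension), not a parabolic. Instead I would proceed in two possible ways: either (i) reduce $v-1$ modulo the individual factors of $|H_0|$—for instance one checks $v\equiv 1\pmod{q^2+1}$, $v\equiv -1\pmod{q+1}$ (apart from the factor $2$), $\gcd(v-1,q)=1$, and $\gcd(v-1,q^2-q+1)=1$—so that $\gcd(v-1,|H_0|)$ divides a small polynomial $f(q)$; or (ii) mimic the subdegree argument of Lemma \ref{lem:SU22:(q+1)} by exhibiting, via \cite{a:reg-classical,a:Saxl2002}, a concrete $H$-orbit on $\Pmc\setminus\{\alpha\}$ of length dividing a polynomial in $q$ of modest degree, coming from the action of $H$ on a $1$-dimensional $\F_q$-subspace not lying in the $\F_{q^2}$-structure preserved by $H$. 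Either route, combined with Lemma \ref{lem:six}(d)(e), produces $k\mid\lambda f(q)$ for some $f(q)$ of degree significantly smaller than that of $v-1$.

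Writing $mk=\lambda f(q)$ with $m<f(q)$ (since $\lambda<k$) and using $k(k-1)=\lambda(v-1)$ from Lemma \ref{lem:six}(a), I then obtain a closed formula $k=m(v-1)/f(q)+1$ polynomial in $m$ and $q$. Substituting this into $k\mid 4a\,q^2(q^4-1)(q-1)$ and taking a suitable integer combination (multiply through and subtract a carefully chosen multiple, exactly as done with the auxiliary polynomials $d(q),h(q)$ in the proofs of Lemmas \ref{lem:GU3} and \ref{lem:SU22:(q+1)}) produces an inequality of the shape $\mathrm{poly}(q)\leq C\cdot a$ whose only solutions are a short finite list of pairs $(p,a)$. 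For each remaining $q=p^a$, a direct check that no admissible $\lambda\in(1,k)$ simultaneously satisfies $\lambda(v-1)=k(k-1)$ and $k\mid 4a\,q^2(q^4-1)(q-1)$ closes the argument.

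The main obstacle, as in Lemma \ref{lem:SU22:(q+1)}, is calibrating $f(q)$: it has to be small enough in degree that the resulting inequality $k^2>\lambda v$ (Lemma \ref{lem:six}(c)) confines $q$ to a manageable range, yet it must be a genuine divisor of $v-1$ (or a genuine subdegree). If the naive $\gcd(v-1,|H_0|)$ is too coarse, the $\F_{q^2}$-subspace structure must be exploited to produce a sharper subdegree before the polynomial elimination in the third step can be carried out.
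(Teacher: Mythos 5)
Your overall strategy matches the paper's: the paper obtains $v=q^4(q^3+1)(q+1)/2$ (your value of $|H_0|$, with the factor $q-1$, is the correct one --- the paper's displayed $|H_0|$ contains a typo but its $v$ agrees with yours), then follows your route (ii), citing \cite{a:reg-classical} and \cite{a:Saxl2002} for a subdegree dividing $2(q^4-1)$ coming from the action of $H$ (the stabiliser of a pair of totally singular $2$-spaces) on a second such pair. Combining this with the computation that $\gcd\bigl(v-1,2(q^4-1)\bigr)$ divides $2(q^2+1)$ gives $f(q)=2(q^2+1)$, and the polynomial elimination with auxiliary polynomials $d(q),h(q)$ against $k\mid 4a\,q^2(q^4-1)(q-1)$ confines $(p,a)$ to a short finite list, exactly as you describe. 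Your alternative route (i) via $\gcd(v-1,|H_0|)$ would also land on essentially the same $f(q)$, since $q^2+1$ divides $2(v-1)$ while $q^2-1$ contributes only a bounded factor.

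The gap is in your final step. The arithmetic sieve does \emph{not} close the argument: for $q=4$ and $m=2$ the admissible parameter set $(v,k,\lambda)=(41600,2448,144)$ survives --- it satisfies $k(k-1)=\lambda(v-1)$, $k=2448$ divides $4a\,q^2(q^4-1)(q-1)=97920$, and every other divisibility condition available here. No further congruence juggling removes it. The paper disposes of this case by an explicit computation in \GAP\ \cite{GAP4}: it constructs the degree-$41600$ permutation representation of each $G$ with socle $\PSU_4(4)$ on the cosets of $H$, computes the suborbits, finds exactly two $H$-orbits of length $2448$ (the only candidates for a base block $B$, by flag-transitivity), and checks that $|B^x\cap B|\neq 144$ for some $x\in G$, so no symmetric design with these parameters and this group exists. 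As written, your proof would stop one case short of the stated conclusion; you need to anticipate that a genuine parameter set survives the number theory and supply a structural or computational argument to eliminate it.
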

\begin{proof}
Let $\{e_1,e_2,f_1,f_2\}$ be a standard basis for underlying unitary space $V$. In this case, $H$ preserves a partition $V = V_{1}\bigoplus V_{2}$ of totally singular subspaces $V_{1}$ and $V_{2}$ of dimension $2$, say $V_1=\langle e_1,e_2\rangle$ and $V_2=\langle f_1,f_2\rangle$.  Let now $y=\{\langle  e_{1}, f_{4} \rangle,\langle e_{2}, f_{3} \rangle\}$. Then the subdegree $|H:H_{y}|$ of $G$ divides $2(q^{4}-1)$ (see \cite[p. 550]{a:reg-classical} and \cite[pp. 336-337]{a:Saxl2002}). Thus by Lemma~\ref{lem:six}(e), we conclude that $k$ divides $2\lambda (q^{4}-1)$. Here $|H_0|=2q^2(q^4-1)(q+1)/\gcd(4,q+1)$, and so \eqref{eq:v} implies that $v=q^4(q^3+1)(q+1)/2$.

Note that $2(v-1)=(q-1)(q^7+2q^6+2q^5+3q^4+4q^3+4q^2+4q+4)+2$ and $2(v-1)=(q+1)(q^7+q^4)-2$. Then $v-1$ is coprime to $q^{2}-1$. Note also that $q^{2}+1$ divides $2(v-1)$. Thus $\gcd(v-1,2(q^{4}-1))$ divides $2(q^{2}+1)$. Since $k$ divides $\lambda \gcd(v-1,  2(q^{4}-1))$, it follows that $k$ divides $\lambda f(q)$, where $f(q)=2(q^2+1)$, and hence $mk = \lambda f(q)$, for some positive integer $m$. Therefore,
\begin{align}\label{eq:case6-1}
  k= \frac{m(v-1)}{f(q)}+1,
\end{align}
where
\begin{align}\label{eq:case2-sl2-m}
  m<2(q^2+1).
\end{align}

Note by (\ref{eq:k-out}) that $k\mid 4ag(q)$, where $g(q)=q^2(q^4-1)(q-1)$. Then, by~\eqref{eq:case6-1}, we must have
\begin{align}\label{eq:case6-2}
m(v-1)+f(q)\mid 4af(q)g(q).
\end{align}
Let now $d(q)=48q^7-32q^6+48q^4-16q^3+16q^2+32q-64$ and $h(q)=32(q-2)$. Then
\begin{align*}
  4am f(q)g(q)-ah(q)[m(v-1)+f(q)]= am d(q)-a f(q)h(q).
\end{align*}
Therefore, \eqref{eq:case6-2} implies that
 $m(v-1)+f(q)\leq |am d(q)-a f(q)h(q)|< am [d(q)+f(q)h(q)]$.
So $q^4(q^3+1)(q+1)<2a[d(q)+f(q)h(q)]$. Since $d(q)+f(q)h(q)< 48q^{7}$, for all $q\geq 2$, $(q^3+1)(q+1)< 96a q^3$. This inequality holds only for pairs $(p,a)$ as in Table~\ref{tbl:case6-ap} below:
\begin{table}[h]
\centering\scriptsize
   \caption{Some parameters for Lemma~\ref{lem:SL2:(q-1).2}\label{tbl:case6-ap}}
  \begin{tabular}{c|ccccccccc}
   \noalign{\smallskip}\hline\noalign{\smallskip}
  $p$ &
  $2$ &
  $3$ &
  $5$ &
  $7$, $11$, $13$ &
  $17$, \ldots, $89$\\
   \hline\noalign{\smallskip}
  $a\leq $ &
  $9$ &
  $5$ &
  $3$ &
  $2$ &
  $1$ \\
   \noalign{\smallskip}\hline\noalign{\smallskip}
\end{tabular}
\end{table}

The only value of $q=p^a$ satisfying \eqref{eq:case6-1}  when $m$ is as in \eqref{eq:case2-sl2-m} for which the fraction $k(k-1)/(v-1)$  is a positive integer is $q=4$ when $m=2$. In which case, we obtain the parameters $(v, k, \lambda)=(41600, 2448, 144)$ with $X=\PSU_{4}(4)$. In what follows, we make use of the software \GAP \  \cite{GAP4} and show that such a design never exists.

Let $G$ be one of the groups $X$, $X:2$ or $X:4$, and  $H$ is $H_0$, $H_0\cdot 2$ or $H_0\cdot 4$, respectively.
We note that the group $G$ has one conjugacy class of subgroup containing $H_0$. We use the command \verb|AtlasGroup("U4(4)")| to define the group $X$,  and then we find all subgroups $G$ of $\Aut(X)$ containing $X$. Since the maximal subgroups $H$ of $G$ is not available in \GAP, we need to construct $H$ as a subgroup of $G$.  We first define the semidirect product $T:=\PSL_2(2^4)\cdot 3$ and then we embed this group $T$ into $G$ as a subgroup via command \verb|IsomorphicSubgroups(G,T)|. For each group $G$, there is only one such isomorphic subgroup $K$ in $G$, and then by \verb|IntermediateSubgroups(G,K)|, we find the overgroups of $K$. Now we can choose those subgroups $H$ of index $41600$. Then we define the right coset action of $G$ on the set $\Pmc:=\mathcal{R}_H$ of right cosets of $H$ in $G$, and so we can view $G$ and $H$ as  subgroups of $S_{41600}$ by taking image of the permutation representation of the right coset action. We now obtain the $H$-orbits on $\Pmc$ and the subdegrees of $G$ which are listed in Table~\ref{tbl:subdegs}. Since $G$ is flag-transitive, each $H$-orbit of size $2448$ (if there exists) would be a possible base block $B$ for $\Dmc$. At this stage, we obtain two base blocks for each group $G$, see Table~\ref{tbl:subdegs}. Although, the command \verb|BlockDesign( 41600, [B], G )| returns true for the obtained base blocks, these designs are not symmetric as $|B^x\cap B|\neq 144$, for some $x\in G$.
\end{proof}

\begin{table}[h]
\centering\scriptsize
   \caption{Some subdegrees of almost simple group $G$ with socle $\PSU_{4}(4)$.}\label{tbl:subdegs}
  \begin{tabular}{lp{11cm}}
   \noalign{\smallskip}\hline\noalign{\smallskip}
  \multicolumn{1}{c}{$G$} &
  \multicolumn{1}{c}{Subdegrees} \\
  \hline\noalign{\smallskip}
  $\PSU_{4}(4)$&
  $1$, $102$, $136$, $153$, $204$, $408$, $816$, $816$, $1224$, $1632$, $2040$, $2040$, $2448$, $2448$, $3060$, $ 4080$, $4080$, $4896$, $4896$, $6120$ \\
  $\PSU_{4}(4):2$ &
   $1$, $102$, $136$, $153$, $204$, $408$, $816$, $816$, $1224$, $1632$, $2040$, $2040$, $2448$, $2448$, $3060$, $ 4080$, $4080$, $
4896$, $4896$, $6120$
   \\
  $\PSU_{4}(4):4$&
   $1$, $102$, $136$, $153$, $204$, $408$, $1224$, $1632$, $1632$, $ 2448$, $2448$, $3060$, $4080$, $6120$, $8160$, $9792$\\
   \hline\noalign{\smallskip}
\end{tabular}
\end{table}

\begin{lemma}\label{lem:D-last}
The subgroup $H_{0}$ cannot be $^{\hat{}}\SU_{4}(q_{0})$, where $q=q_{0}^r$ and $r$ odd prime.
\end{lemma}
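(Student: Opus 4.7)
The plan is to combine the enormous size of $v=[X:H_0]$ in the subfield case with the bound $k\leq|H|$ from Lemma~\ref{lem:six}(c). I will first obtain a clean formula for $v$, then dispose of $r\geq 5$ by a crude size estimate, and finally deal with the borderline case $r=3$ by a more delicate divisibility argument.

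Since $r$ is odd, the $2$-adic identity $\nu_2(q_0^r+1)=\nu_2(q_0+1)$ (by Lifting the Exponent) yields $\gcd(4,q+1)=\gcd(4,q_0+1)=:d_0$, so $|H_0|=|\PSU_4(q_0)|$ and
\begin{align*}
v=\frac{|\PSU_4(q)|}{|\PSU_4(q_0)|}=q_0^{6(r-1)}\cdot\frac{q_0^{2r}-1}{q_0^2-1}\cdot\frac{q_0^{3r}+1}{q_0^3+1}\cdot\frac{q_0^{4r}-1}{q_0^4-1},
\end{align*}
a polynomial of $q_0$-degree $15(r-1)$. Writing $a=ra_0$ with $q_0=p^{a_0}$, we have $|H|\leq|\Out(X)|\cdot|H_0|=2ra_0 d_0|H_0|$, so Lemma~\ref{lem:six}(a,c) with $\lambda\geq 1$ gives
\begin{align*}
v-1<|H|^2\leq 4r^2a_0^2d_0^2|H_0|^2. \tag{$\ast$}
\end{align*}
For $r\geq 5$ the left-hand side of $(\ast)$ has $q_0$-degree at least $60$ while the right has degree $30$; already at $(q_0,a_0)=(2,1)$ this forces $2^{60}\leq 4\cdot 25\cdot 1\cdot 16$, a contradiction. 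Hence $r=3$.

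For $r=3$ the degree bound is tight, so I sharpen it via divisibility. Set $g=\gcd(k,v-1)$; from $k\mid\lambda(v-1)$ we get $k/g\mid\lambda$, whence $g\geq k/\lambda>\sqrt{v-1}/\lambda$. Since $\lambda v<k^2\leq|H|^2$, the bound $\lambda\leq|H|^2/v$ for $r=3$ is a small constant multiple of $a_0^2d_0^2$. By Lemma~\ref{lem:Tits}, $\gcd(v-1,p)=1$, so $g$ divides the $p'$-part $|H|_{p'}$, which itself divides $6a_0(q_0^2-1)(q_0^3+1)(q_0^4-1)$. Reducing $v-1$ modulo $q_0^2-1$, $q_0^3+1$, and $q_0^4-1$ leaves short residues (e.g.\ $v-1\equiv 17-9q_0\pmod{q_0^2-1}$), so $\gcd(|H|_{p'},v-1)$ is polynomially bounded of $q_0$-degree at most $4$, whereas the lower bound $\sqrt{v-1}/\lambda$ is of order $q_0^{15}/a_0^2$. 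This is inconsistent for every $q_0\geq 2$; for instance at $q_0=2$ we have $v=1338494976$, $|H|\leq 155520=2^7\cdot 3^5\cdot 5$, and $v-1=5^2\cdot 53539799$ is coprime to $2\cdot 3$, so $\gcd(|H|,v-1)\mid 5$, far below the required lower bound $k/\lambda>2000$.

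The main obstacle is precisely the borderline case $r=3$, where naive degree-counting becomes an equality rather than a strict inequality; one must refine to the gcd argument above and, should the modular reduction not suffice for the very smallest $(q_0,a_0)$, finish with a short \GAP\ verification in the spirit of Lemma~\ref{lem:SL2:(q-1).2}.
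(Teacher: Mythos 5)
Your outline is sound and, for the decisive case $r=3$, genuinely different from the paper's. The reduction to $r=3$ is essentially the same in both arguments (a size estimate from $v-1\leq\lambda(v-1)=k(k-1)<k^2\leq|H|^2$), and your degree count does eliminate every odd $r\geq 5$, although the displayed inequality ``$2^{60}\leq 4\cdot 25\cdot 1\cdot 16$'' is garbled: after cancelling $|H_0|^2\approx q_0^{30}$ the comparison should read $q_0^{15(r-1)-30}\leq 4r^2a_0^2d_0^2$, i.e.\ $2^{30}\leq 1600$ at $(q_0,a_0,r)=(2,1,5)$. A point in your favour is that you work with $\lambda\geq 1$, whereas the paper imports the classifications for $\lambda\leq 3$ to assume $\lambda\geq 4$. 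For $r=3$ the paper proceeds differently: combining Tits' Lemma with $k\mid\lambda(v-1)$ and $k\mid 2aq_0^6(q_0^4-1)(q_0^3+1)(q_0^2-1)$ it gets $k\mid 2\lambda a(q_0^4-1)(q_0^3+1)(q_0^2-1)$, and then applies $\lambda v<k^2$ \emph{twice} to sandwich $\lambda$ between $4a^2$ from above and (essentially) $q_0^{12}/4a^2$ from below, yielding $q_0^{12}<16a^4$, which fails uniformly in $q_0$. Your device --- $\gcd(k,v-1)\geq k/\lambda>\sqrt{v-1}/\lambda$ from below versus a modular bound on $\gcd(|H|_{p'},v-1)$ from above --- is correct step by step: the congruence $v-1\equiv 17-9q_0\pmod{q_0^2-1}$ checks out, and one similarly finds $\gcd(v-1,q_0+1)\mid 26$, $\gcd(v-1,q_0^2+1)\mid 10$ and $\gcd(v-1,q_0^2-q_0+1)=1$, so your upper bound on $g$ is in fact a constant times $a_0$, even better than degree $4$.

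The loose end is the final assertion that this is ``inconsistent for every $q_0\geq 2$'': you verify it only for $q_0=2$. The uniform constants that the residues above actually deliver (an upper bound for $g$ of roughly $6a_0\cdot 208^2\cdot 260$) measured against the lower bound of order $q_0^{15}/(40a_0^2)$ still leave $q_0=3$ and $q_0=4$ standing, and these must be dispatched by the same explicit arithmetic you carried out at $q_0=2$ (they do fall to it; no \GAP\ computation is needed). As written, therefore, the proof is a valid strategy with an acknowledged but unexecuted finishing step rather than a complete argument; if you prefer to avoid the residual case analysis altogether, the paper's double application of $\lambda v<k^2$ is the cleaner way to close the $r=3$ case in one stroke.
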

\begin{proof}
In this case,  $|H_{0}|=q_{0}^{6}(q_{0}^{4}-1)(q_{0}^{3}+1)(q_{0}^{2}-1)/\gcd(4,q_0^r+1)$. It follows from~\eqref{eq:v} that
\begin{align}\label{eq:lem-SU4-v}
  v= \frac{q_{0}^{6r}(q_{0}^{4r}-1)(q_{0}^{3r}+1)(q_{0}^{2r}-1)}{q_{0}^{6}(q_{0}^{4}-1)(q_{0}^{3}+1)(q_{0}^{2}-1)}
\end{align}
Note by (\ref{eq:k-out}) that $k$ divides $2aq_{0}^{6}(q_{0}^{4}-1)(q_{0}^{3}+1)(q_{0}^{2}-1)$. We may assume that $\lambda\geq 4$  by  \cite{a:reg-classical,a:Zhou-lam3-classical}. Moreover, $a^2\leq q_{0}^r$ as $q=q_{0}^{r}$. Since $\lambda v<k^2$ by Lemma~\ref{lem:six}(b), we must have
\begin{align*}
  \frac{4q_{0}^{6r}(q_{0}^{4r}-1)(q_{0}^{3r}+1)(q_{0}^{2r}-1)}{q_{0}^{6}(q_{0}^{4}-1)(q_{0}^{3}+1)(q_{0}^{2}-1)}
\leq \lambda v< k^2
&\leq4a^2\cdot q_{0}^{12}(q_{0}^{4}-1)^2(q_{0}^{3}+1)^2(q_{0}^{2}-1)^2 \\
  &\leq 4\cdot q_{0}^{12+r}(q_{0}^{4}-1)^2(q_{0}^{3}+1)^2(q_{0}^{2}-1)^2
\end{align*}
and hence
\begin{align*}
q_{0}^{6r}(q_{0}^{4r}-1)(q_{0}^{3r}+1)(q_{0}^{2r}-1)<  q_{0}^{18+r}(q_{0}^{4}-1)^3(q_{0}^{3}+1)^3(q_{0}^{2}-1)^3.
\end{align*}
Note that $q_{0}^{15r-1}\leq q_{0}^{6r}(q_{0}^{4r}-1)(q_{0}^{3r}+1)(q_{0}^{2r}-1)$ and $q_{0}^{18+r}(q_{0}^{4}-1)^3(q_{0}^{3}+1)^3(q_{0}^{2}-1)^3\leq q_{0}^{45+r}$. Then $q_{0}^{15r-1}<  q_{0}^{45+r}$, and this implies that $r =2$ or $3$. Since $r$ is odd, we must have $r=3$. Therefore,
\begin{align}\label{lem:case-7-r2-v}
  v=\frac{q_{0}^{12}(q_{0}^{12}-1)(q_{0}^{9}+1)(q_{0}^{6}-1)}{(q_{0}^{4}-1)(q_{0}^{3}+1)(q_{0}^{2}-1)}.
\end{align}
By (\ref{eq:k-out}), $k$ divides $2aq_{0}^{6}(q_{0}^{4}-1)(q_{0}^{3}+1)(q_{0}^{2}-1)$.  Then by Lemma~\ref{lem:six}(c), we have that
\begin{align*}
  \lambda \cdot\frac{q_{0}^{12}(q_{0}^{12}-1)(q_{0}^{9}+1)(q_{0}^{6}-1)}{(q_{0}^{4}-1)(q_{0}^{3}+1)(q_{0}^{2}-1)}< k^{2}\leq 4a^2q_{0}^{12}(q_{0}^{4}-1)^2(q_{0}^{3}+1)^2(q_{0}^{2}-1)^2.
\end{align*}
Therefore,
\begin{align}\label{eq:lem13-case1-lam}
  \lambda< 4a^2\cdot \frac{(q_{0}^{4}-1)^3(q_{0}^{3}+1)^3(q_{0}^{2}-1)^3}{(q_{0}^{12}-1)(q_{0}^{9}+1)(q_{0}^{6}-1)}\leq 4a^2.
\end{align}
Since $k$ divides $2aq_{0}^{6}(q_{0}^{4}-1)(q_{0}^{3}+1)(q_{0}^{2}-1)$ and $v-1$ is coprime to $q_{0}$ by   Lemma~\ref{lem:Tits}, $k$ must divide $2\lambda a (q_{0}^{4}-1)(q_{0}^{3}+1)(q_{0}^{2}-1)$. Now Lemma~\ref{lem:six}(c) implies that
\begin{align*}
  \lambda\cdot \frac{q_{0}^{12}(q_{0}^{12}-1)(q_{0}^{9}+1)(q_{0}^{6}-1)}{(q_{0}^{4}-1)(q_{0}^{3}+1)(q_{0}^{2}-1)}<k^{2} \leq 4\lambda^2 a^2(q_{0}^{4}-1)^2(q_{0}^{3}+1)^2(q_{0}^{2}-1)^2,
\end{align*}
and so
\begin{align}\label{eq:psl-3}
  \frac{q_{0}^{12}(q_{0}^{12}-1)(q_{0}^{9}+1)(q_{0}^{6}-1)}{(q_{0}^{4}-1)^3(q_{0}^{3}+1)^3(q_{0}^{2}-1)^3}< 4 \lambda a^2.
\end{align}
Since $\lambda\leq 4 a^{2}$ by (\ref{eq:lem13-case1-lam}), it follows that
\begin{align*}
  q_{0}^{12}<16 a^{4}.
\end{align*}
Since also $q_{0}\geq 2$, $2^{6a}<16\cdot a^{4}$, which is impossible.
\end{proof}

\begin{lemma}\label{lem:Sp4}
If $H_{0}$ is $^{\hat{}}\Sp_{4}(q)\cdot \gcd(2,q+1)$, then $q=2$ and $(v, k, \lambda)=(36, 15, 6)$.
\end{lemma}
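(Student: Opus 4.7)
The plan is to first compute $|H_{0}|=q^{4}(q^{2}-1)(q^{4}-1)\gcd(2,q+1)/\gcd(4,q+1)$ and apply \eqref{eq:v} to obtain
\[
v=\frac{q^{2}(q^{3}+1)}{\gcd(2,q+1)}=\frac{q^{2}(q+1)(q^{2}-q+1)}{\gcd(2,q+1)}.
\]
The case $q=2$ I would handle directly: then $v=36$, $|H_{0}|=|\Sp_{4}(2)|=720$, and Fisher's relation $k(k-1)=35\lambda$ combined with $k\mid|H|$ and $2<k<35$ leaves only the Menon parameters $(k,\lambda)=(15,6)$, matching lines~1-2 of Table~\ref{tbl:main} via the cited construction from \cite{a:Braic-2500-nopower,a:rank3}.

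For $q\geq3$ the goal is to rule out every design via divisibility. By Lemma~\ref{lem:Tits} we have $\gcd(p,v-1)=1$, so the $p$-part of $|H|$ contributes nothing to $\gcd(v-1,|H|)$. I would then compute $e:=\gcd(v-1,|H|)$ explicitly. For $q$ even, $v-1=q^{5}+q^{2}-1$, and simple congruences give $v-1\equiv 1\pmod{q-1}$, $v-1\equiv -1\pmod{q+1}$, and $v-1\equiv q-2\pmod{q^{2}+1}$, so $\gcd(v-1,q^{2}-1)=1$, $\gcd(v-1,q^{2}+1)\mid\gcd(q-2,5)$, and hence $e\mid 5a$. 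For $q$ odd the factorisation
\[
v-1=\frac{(q-1)(q^{4}+q^{3}+q^{2}+2q+2)}{2}
\]
should allow analogous congruence checks modulo $q+1$ and $q^{2}+1$ to bound the "coprime to $q-1$'' part of $e$.

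Combining $k\mid\lambda(v-1)$ with $k\mid|H|$ gives $k\mid\lambda e$, so writing $mk=\lambda e$ with $m<e$ and substituting into Fisher's relation yields $k=m(v-1)/e+1$. Then the requirement $k\mid 2a\gcd(4,q+1)|H_{0}|$, analogously to the argument in Lemma~\ref{lem:GU3}, combined with $\lambda<k<v-1$, should force $q$ into a short finite list of small prime powers, which I would eliminate case by case by checking integrality of $\lambda=k(k-1)/(v-1)$ and divisibility of $k$ into $|H|$, or by a short \textsf{GAP} computation modelled on the one in Lemma~\ref{lem:SL2:(q-1).2}.

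The main obstacle will be the odd-$q$ analysis: the identity $q^{5}+q^{2}-2=(q-1)(q^{4}+q^{3}+q^{2}+2q+2)$ forces $(q-1)/2\mid v-1$, so $e$ can be as large as a constant multiple of $q-1$, which inflates the admissible range of $k$ and weakens the bound that $k\mid|H|$ provides. Keeping the resulting inequalities tight enough to force $q$ into a finite set will require careful tracking of common prime factors of $(q-1)/2$, $q+1$, $q^{2}+1$ and $q^{4}+q^{3}+q^{2}+2q+2$, most of which I expect to bound by small constants (at most $7$ or $10$) via elementary congruence arguments.
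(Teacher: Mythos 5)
Your proposal follows essentially the same route as the paper's proof: the same computation of $v=q^{2}(q^{3}+1)/\gcd(2,q+1)$, the same direct elimination at $q=2$, the same use of Lemma~\ref{lem:Tits} and congruences modulo $q\pm1$ and $q^{2}+1$ to bound $\gcd(v-1,|H|)$ (the paper gets exactly your constants, $\gcd(q-2,5)$ for $q$ even and $\gcd(q+2,5)\gcd(q-1,7)(q-1)$ for $q$ odd), followed by the substitution $mk=\lambda e$ and the polynomial linear-combination trick to force $q$ into a finite list. You have correctly identified the odd-$q$ case as the genuine difficulty; be aware that the paper additionally needs a separate subargument showing $q\nmid k$ to tighten the divisor $|H_{0}|$ before the final divisibility bound, and that the resulting finite check for odd $q$ runs over primes up to roughly $2\times 10^{4}$, so the "short finite list" you anticipate is short only in the exponent $a$, not in $p$.
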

\begin{proof}
Here $|H_0|=d^{-1}cq^4(q^2-1)(q^4-1)$, where $d=\gcd(4,q+1)$ and $c=\gcd(2,q+1)$. So by~\eqref{eq:v}, we have $v=q^2(q^3+1)/c$, where $c=\gcd(2,q+1)$. It follows from \eqref{eq:k-out} that $k$ divides $2ag(q)$, where $g(q)=q^4(q^2-1)(q^4-1)$. We now consider the following two cases.\smallskip

\noindent \textbf{Case 1:}
Let $q$ be even. Then $c=\gcd(2,q+1)=1$. If $q=2$, then $v=36$. It follows from~\eqref{eq:k-out} that $k$ divides $1440$. We then easily observe that for each divisor $k$ of $1440$, the fraction $k(k-1)/(v-1)$ is not a positive integer unless $k=15$, in which case $v=36$ and $\lambda=6$.  By~\cite{a:Braic-2500-nopower,a:rank3}, this design is a Menon design with parameters $(36, 15, 6)$ and flag-transitive automorphism group $\PSU_{4}(2)$ or $\PSU_{4}(2):2$.\smallskip

Let now $q\geq 4$. Note that $v-1$ is coprime to $q(q^2-1)$. Moreover, since $v-1=(q^2+1)(q^3-q+1)+q-2$ and $q^2+1=(q-2)(q+2)+5$, it follows that $\gcd(v-1,q^2+1)$ divides $\gcd(5,q-2)$. Therefore, $\gcd(v-1,(q^2-1)(q^4-1))$ divides $\gcd(5, q-2)$, we have that $k$ is a divisor of $\lambda e a $, where $e:=\gcd(5, q-2)$. Then there exists a positive integer $m$ such that  $mk=\lambda e a$. Thus,
\begin{align}
  k= \frac{m(v-1)}{ea}+1,\label{eq:case1-Sp4-k}
\end{align}
where
\begin{align}
  m<ea=\gcd(q-2,5)a.\label{eq:case1-Sp4-m}
\end{align}
We first show that $q$ does not divide $k$. Let $q$ divide $k$. Then~\eqref{eq:case1-Sp4-k} implies that $q$ divides $ea-m$. Thus $q\leq ea-m\leq \gcd(q-2,5)a-1$, which is impossible.
Therefore, $q$ does not divide $k$, and so it follows from Lemma~\ref{lem:New}(b) and~\eqref{eq:case1-Sp4-k} that
\begin{align}\label{eq:case1-Sp4-k-2}
  m(v-1)+ea\mid 2ea^{2}g_{1}(q),
\end{align}
where $g_{1}(q)=g(q)/q^3=q(q^2-1)(q^4-1)$. Let $d(q)=q^4+q^3-q^2-q$ and $h(q)=q^2-1$. Then
\begin{align}
   2ea^{2}h(q)[m(v-1)+ea]-2mea^{2}g_{1}(q)=2ea^{2}[md(q)+eah(q)].
\end{align}
Therefore, by \eqref{eq:case1-Sp4-k-2},  we conclude that  $v-1<2ea^{2}[|d(q)|+ea|h(q)|]$. This inequality holds only when $a\leq 9$. Then for each $q=2^a$ with $a\leq 9$, the possible values of $v$ are listed in Table~\ref{tbl:case1-Sp4-mv} below. By~\eqref{eq:case1-Sp4-m}, we can also find an upper bound for $m$ listed as in the third column of Table~\ref{tbl:case1-Sp4-mv}.
\begin{table}[h]
\centering
\scriptsize
  \caption{Possible value for $m$ and $v$ when $q=2^{a}$ with $1<a\leq 9$.}
  \label{tbl:case1-Sp4-mv}
    \begin{tabular}{lll}
     \noalign{\smallskip}\hline\noalign{\smallskip}
    \multicolumn{1}{c}{$q$} &
    \multicolumn{1}{c}{$v$} &
    \multicolumn{1}{c}{$m<$} \\
    \hline\noalign{\smallskip}
    $4$ & $1040$ &$2$\\
    $8$ & $32832$ &$3$\\
    $16$ & $1048832$ &$4$\\
    $32$ & $33555456$ &$25$\\
    $64$ & $1073745920$ &$6$\\
    $128$ & $34359754752$ &$7$\\
    $256$ & $1099511693312$ &$8$\\
    $512$ & $35184372350976$ &$45$\\
     \noalign{\smallskip}\hline\noalign{\smallskip}
  \end{tabular}
\end{table}

We now obtain by~\eqref{eq:case1-Sp4-k}, the parameter $k$, but for such $k$, we can not find any possible parameter $\lambda$ satisfying  Lemma~\ref{lem:six}(a), which is a contradiction.\smallskip

\noindent \textbf{Case 2:}
Let $q$ be odd. Then $c=\gcd(2,q+1)=2$ and $v=q^2(q^3+1)/2$. Note that $q-1$ divides $2(v-1)=q^2(q^3+1)-2$. Set $w(q):=2(v-1)/(q-1)=q^4+q^3+q^2+2q+2$. Then $q+1$ is coprime to $w(q)$. Moreover, $w(q)=(q-1)(q^3+2q^2+3q+5)+7$, $w(q)=(q^{2}+1)(q^{2}+q)+q+2$ and $q^{2}+1=(q+2)(q-2)+5$. Therefore $\gcd(v-1, 2(q^4-1)(q^2-1))$ divides $\gcd(q+2, 5)\gcd(q-1, 7)(q-1)$, and so Lemmas~\ref{lem:six}(a) and~\ref{lem:Tits} imply that $k$ divides $\lambda as f(q)$, where $f(q)=q-1$ and
\begin{align}
  s= \gcd(q+2, 5)\gcd(q-1, 7).\label{eq:case2-Sp4-s}
\end{align}
Then $mk=\lambda a s f(q)$, for some positive integer $m$, and so \begin{align}
  k= \frac{m(v-1)}{as f(q)}+1,\label{eq:case2-Sp4-k}
\end{align}
where $f(q)=q-1$, $s= \gcd(q+2, 5)\gcd(q-1, 7)$ and
\begin{align}\label{eq:case2-Sp4-m}
  m< as (q-1).
\end{align}
As in Case 1, we first show that $q$ does not divide $k$. Assume the contrary. Then~\eqref{eq:case2-Sp4-k} implies that $q\mid m+as$,
and so $nq=m+sa$, for some positive integer $n$. Thus
 \begin{align}\label{eq:case2-Sp4-m1}
 m=nq-as.
\end{align}
 Since $m<sa (q-1)$, we have that
\begin{align}\label{eq:case2-n}
  n<as .
\end{align}
Since also $mk=\lambda as f(q)$ and $k(k-1)=\lambda(v-1)$, it follows that
\begin{align*}
  2a^2s^2 \lambda = m^{2}(q^3+2q^2+3q+5)+\frac{7m^2+2mas}{q-1},
\end{align*}
and so $q-1$ divides $7m^2+2mas$. Therefore,
by~\eqref{eq:case2-Sp4-m1}, we conclude that
\begin{align}\label{eq:case2-Sp4-q-1-m-2}
 q-1\mid 7(n^2q^2-2nasq+a^2s^2)+2(nasq-a^2s^2).
\end{align}
As
\begin{multline*}
  7(n^2q^2-2nasq+a^2s^2)+2(nasq-a^2s^2)= \\
  7n^2(q^2-1)-12nas(q-1)+7n^2-12nas+5a^2s^2,
\end{multline*}
$q-1$ must divide $7n^2-12nas+5a^2s^2$. Since now $n<as$, we conclude that $7n^2-12nas+5a^2s^2>0$, and so  $q-1\leq 7n^2-12nas+5a^2s^2$. Moreover,  $7n^2-12nas<0$. Therefore,
\begin{align}\label{eq:case2-Sp4-q-1-n}
q-1\leq 5a^2s^2.
\end{align}
If $a>1$, then the inequality~\eqref{eq:case2-Sp4-q-1-n} holds only for the pairs $(p,a)$ as  below:
\begin{align}\label{eq:sp4-case2-ap-1}
  \begin{array}{llll}
    p =3, & \quad a=2,3,4,5,6; \\
    p =7, 11, 37, & \quad a=3; \\
    p =13, 29, & \quad a=2.
 \end{array}
\end{align}
Note that $n<as$ and $m=nq-as$, for the values of $(p,a)$ as in~\eqref{eq:sp4-case2-ap-1}, we can find the parameter $k$ from \eqref{eq:case2-Sp4-k}, and hence we easily observe that for these values $k$, the fraction $k(k-1)/(v-1)$ is not a positive integer, which is a contradiction. Therefore, $a=1$. In this case, $m=nq-s$ and $n<s$, where $s\in \{5,7,35\}$ by \eqref{eq:case2-Sp4-s} and \eqref{eq:case2-n}. Therefore, $n$ is at most $4$, $6$ or $34$ respectively for $s=5$, $7$ or $35$. Moreover, for these values of $n$ and $s$, $q-1$ divides $7n^{2}-12ns+5s^{2}$. Therefore, $(s,q,n)$ is as in Table \ref{tbl:Sp4-snq} for which, by \eqref{eq:case2-Sp4-k}, we cannot find any possible parameters $k$ and $\lambda$.  Hence, $k$ is not a multiple of $q$.

\begin{table}[h]
    \centering
    \scriptsize
    \caption{Possible value of $(s,q,n)$ in Lemma \ref{tbl:Sp4-ap}.}\label{tbl:Sp4-snq}
\begin{center}
  \begin{tabular}{llll}
   \noalign{\smallskip}\hline\noalign{\smallskip}
    $s$ & $q$ & $n$ \\
    \hline\noalign{\smallskip}
    $5$ & $3$ & $1,3$ \\
    $5$ & $13$ & $1$ \\
    $5$ & $73$ & $1$ \\
    $7$ & $29$ &  $1,3$ \\
    $35$ & $43$ & $1,5,7,11,13,17,19,23,29,31$\\
    $35$ & $113$ & $1,3,9,11,17,19,27,33$\\
    $35$ & $463$ & $13$\\
    $35$ & $673$ & $19$\\
    $35$ & $883$ & $7$\\
    $35$ & $953$ & $1$\\
   \hline\noalign{\smallskip}
 \end{tabular}
\end{center}
\end{table}

Therefore, by Lemma~\ref{lem:New}(b) and~\eqref{eq:case2-Sp4-k}, we conclude that
\begin{align}\label{eq:Sp4-case2-1}
  m(v-1)+asf(q)\mid 2a^2sf(q)g_{1}(q),
\end{align}
where $g_{1}(q)=g(q)/q^{3}=2q(q^2-1)(q^4-1)$ and $f(q)= q-1$. Let now $d(q)=8q^3-2q^2-6q$ and $h(q)=2q^3-2q^2-2q$. Then
   $2a^2smf(q)g_{1}(q)-4a^2sh(q)[m(v-1)+asf(q)]=2a^2s[md(q)-2asf(q)h(q)]$.
It follows from \eqref{eq:Sp4-case2-1} that $v-1<2a^2s[|d(q)|+2as|f(q)h(q)|]$. This inequality holds only for pairs $(p,a)$ as in Table~\ref{tbl:Sp4-ap} below:
\begin{table}[h]
\centering\scriptsize
   \caption{Some parameters for Lemma~\ref{lem:Sp4}\label{tbl:Sp4-ap}}
  \begin{tabular}{ll}
   \noalign{\smallskip}\hline\noalign{\smallskip}
  $p$ & $a\leq $ \\
   \hline\noalign{\smallskip}
  $3$ & $12$\\
  $5$ & $6$\\
  $7$, $11$, $17$, $23$, $37$,$67$ & $3$\\
  $13$ & $4$ \\
  $29$, $41$, $43$, $71$ & $2$ \\
  $53$, $73, \ldots,19433$ & $1$\\
   \hline\noalign{\smallskip}
\end{tabular}
\end{table}

Again these values of $q=p^a$ do not give rise to any possible parameters, which is a contradiction.
\end{proof}

\begin{lemma}\label{lem:So4+}
The subgroup $H_{0}$ cannot be $^{\hat{}}{\SO_{4}^{+}}(q)\cdot d$ with $q\geq 5$ odd.
\end{lemma}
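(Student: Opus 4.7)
\emph{Plan.} Following the template of Lemma~\ref{lem:Sp4}, I would begin by computing $|H_{0}|$ from the structure $\SO_{4}^{+}(q)\cdot d$ (using the standard order formula for $\SO_{4}^{+}(q)$ with $q$ odd) and then obtain $v$ from~\eqref{eq:v} as a polynomial in $q$ of degree $6$. Since $H_{0}$ is not a parabolic subgroup, Lemma~\ref{lem:Tits} applies and gives $\gcd(p,v-1)=1$, so $k$ is coprime to $p$ and therefore divides the $p'$-part $|H_{0}|_{p'}$ of $|H_0|$ (up to the $2a\cdot d$ factor coming from $|\Out(X)|$).

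Next, combine Lemma~\ref{lem:six}(d) with Lemma~\ref{lem:New}(b) to see that $k$ divides both $\lambda(v-1)$ and $2a\cdot d\cdot |H_{0}|$, and hence $k$ divides $\lambda\cdot\gcd(v-1,\,2ad\,|H_{0}|_{p'})$. The arithmetic heart of the argument is to show this gcd is small: evaluating $v-1$ modulo $q-1$ and $q+1$ yields small residues, so that $\gcd(v-1,(q^{2}-1)^{2})$ divides $s\cdot f(q)$, where $f(q)$ is a low-degree polynomial and $s$ is a small integer arising from gcds of $q\pm 1$ with fixed constants. This mirrors the reductions carried out in Case~2 of Lemma~\ref{lem:Sp4} and in Lemma~\ref{lem:SU22:(q+1)}. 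Consequently
\[
k=\frac{m(v-1)}{a\,s\,f(q)}+1\qquad\text{with}\quad m<a\,s\,f(q).
\]

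Substituting this expression into the divisibility $k\mid 2a\,|H_{0}|_{p'}$ and forming the standard polynomial combination (a multiple $h(q)\cdot[m(v-1)+asf(q)]$ chosen to cancel the leading contribution modulo $2a\,s\,f(q)\,|H_{0}|_{p'}$) produces an inequality of the form $v-1<2a^{2}s\,(|d(q)|+as|f(q)h(q)|)$, where the right-hand side is a polynomial in $q$ of degree at most $5$. Since $v-1$ has degree $6$, this forces $(p,a)$ into a short finite list in the style of Tables~\ref{tbl:case5-ap} and~\ref{tbl:Sp4-ap}. For each surviving pair $(p,a)$, and each $m$ in the permitted range, I would then check directly that the resulting $k$ does not make $\lambda=k(k-1)/(v-1)$ a positive integer strictly less than $k$, which gives the required contradiction.

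The principal obstacle is the arithmetic bookkeeping. Two subtleties must be handled: first, one should split the analysis on the congruence class of $q\pmod{4}$ because $d=\gcd(4,q+1)$ differs between $q\equiv 1$ and $q\equiv 3$; second, an auxiliary ``$q\nmid k$'' step analogous to Case~2 of Lemma~\ref{lem:Sp4} is likely needed to keep the bound on $q$ tight, because otherwise small factors of $q$ could slip into $k$ and defeat the degree comparison. Any residual small values of $q$ that survive the polynomial inequality can be disposed of by a direct \textsf{GAP} computation in the spirit of Lemma~\ref{lem:SL2:(q-1).2}.
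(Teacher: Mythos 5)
Your plan follows essentially the same route as the paper's proof: split on $d=\gcd(4,q+1)\in\{2,4\}$, bound $\gcd\bigl(v-1,\,2ad|H_0|\bigr)$ by a low-degree quantity (the paper gets $1$ when $d=2$ and a divisor of $(q-1)^2$ when $d=4$, so no separate ``$q\nmid k$'' step is needed), write $k=m(v-1)/(af(q))+1$, and derive a polynomial inequality eliminating all but finitely many $q$, which are then checked directly. One small correction: here $v=q^4(q^3+1)(q^2+1)/d$ has degree $9$ in $q$, not $6$, which only makes your final degree comparison more decisive.
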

\begin{proof}
In this case, $|H_0|=q^2(q^2-1)^2$. Then by~\eqref{eq:v}, we have that $v=q^4(q^3+1)(q^2+1)/d$, where $d=\gcd(q+1, 4)$. Note in this case that $d$ is either $2$ or $4$.\smallskip

Suppose first $d=2$. It follows from \eqref{eq:k-out} that $k$ divides $4ag(q)$, where $g(q)=q^2(q^2-1)^2$. Moreover, Lemma~\ref{lem:six}(a) implies that $k$ divides $\lambda(v-1)$. Note that $v-1=[q^4(q^3+1)(q^2+1)-2]/2$. Since $\gcd(v-1, 4q^2(q^2-1)^2)=1$, we have that $k$ is a divisor of $\lambda a$. Then there exists a positive integer $m$ such that  $mk=\lambda a$. Since now $k(k-1)=\lambda(v-1)$, it follows that  $k= [m(v-1)/a]+1$, and since $k\mid 4ag(q)$, we must have $m(v-1)+a\mid 4a^{2}g(q)$. As $m\geq 1$, $v <4a^{2}g(q)=4a^2q^2(q^2-1)^2$, for $q$ odd, and this does not hold for any $q$, which is a contradiction.\smallskip

Suppose now $d=4$. Then $v=q^4(q^3-1)(q^2+1)/4$. Since $\gcd(v-1,8q^2(q^2-1)^2)$ divides $(q-1)^2$. Then $mk=\lambda a f(q)$, where $f(q)=(q-1)^2$ and $m$ is a positive integer. Thus $k= [m(v-1)/af(q)]+1$. As $k\mid 8ag(q)$, we must have $m(v-1)+af(q)\mid 8a^{2}g(q)f(q)$, and so $v <8a^{2}g(q)f(q)=8a^2q^2(q^2-1)^2(q-1)^2$, for $q$ odd. Thus $q\in \{3,7,11,19,23,27,243\}$, however, for these values of $q$, we cannot find any possible parameters.
\end{proof}

\begin{lemma}\label{lem:So4-}
The subgroup $H_{0}$ cannot be $^{\hat{}}{\SO_{4}^{-}}( q)\cdot d$ with $q$ odd.
\end{lemma}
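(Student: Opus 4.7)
Following the strategy of Lemma~\ref{lem:So4+}, I would begin by computing $|H_{0}|$ for $H_{0}={}^{\hat{}}\SO_{4}^{-}(q)\cdot d$ with $d=\gcd(4,q+1)\in\{2,4\}$, and then read off $v=|X|/|H_{0}|$ from \eqref{eq:v}. Unlike the $^{+}$ case, the order of $\SO_{4}^{-}(q)$ carries a factor $q^{2}+1$ in place of one $q^{2}-1$, so $v$ will be a different polynomial in $q$, roughly of the shape $q^{4}(q^{3}+1)(q^{2}-1)/d$. I would then split into the two sub-cases $d=2$ and $d=4$ and treat them in parallel.

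In each sub-case Lemma~\ref{lem:six}(a) gives $k\mid\lambda(v-1)$, while \eqref{eq:k-out} gives $k\mid 2ad|H_{0}|$, so $k$ divides $2ad\lambda\cdot\gcd(v-1,|H_{0}|)$. The central computation is to bound this gcd. Lemma~\ref{lem:Tits} (applicable since $H_{0}$ is non-parabolic) removes the factor $q^{2}$, while the Euclidean algorithm applied to $v-1$ against each of $q-1$, $q+1$ and $q^{2}+1$ in $|H_{0}|$ should produce a bound $\gcd(v-1,|H_{0}|)\mid f(q)$ for a small polynomial $f(q)$ depending on residues of $q$ modulo a handful of small primes. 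Writing $mk=2ad\lambda f(q)$ for some $m\geq 1$, Lemma~\ref{lem:six}(a) yields $k=m(v-1)/(2adf(q))+1$, and substituting into $k\mid 2ad|H_{0}|$ gives a divisibility of the form $m(v-1)+2adf(q)\mid C\,a^{2}f(q)g(q)$ with $g(q)$ a polynomial of modest degree. Since $m\geq 1$, this forces $v<Ca^{2}f(q)g(q)$, an inequality with only finitely many solutions in $q=p^{a}$.

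The surviving finitely many $q$ are then ruled out by direct verification: for each divisor $k$ of $2ad|H_{0}|$ that satisfies the derived divisibility, one checks whether $k(k-1)/(v-1)$ is a positive integer less than $k$, exactly as in Lemma~\ref{lem:So4+}. The main obstacle, I expect, is the $d=4$ sub-case (i.e.\ $q\equiv 3\imod 4$), where the factors $q^{2}+1$ and $q-1$ of $|H_{0}|$ can each contribute a small constant to $f(q)$, so the gcd has to be tracked via subcases on $q$ modulo $5$, $7$, etc., in the spirit of the $\gcd(q+2,5)$ and $\gcd(q-1,7)$ manoeuvres in Lemma~\ref{lem:Sp4}. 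Should any small $q$ leave a plausible candidate for $k$, it would be dispatched by a \GAP\ computation of subdegrees and base-block intersections analogous to the $q=4$ computation in Lemma~\ref{lem:SL2:(q-1).2}; by analogy with the $^{+}$ case I anticipate that the final tally is vacuous and no symmetric design arises.
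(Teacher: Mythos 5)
Your proposal follows essentially the same route as the paper: you correctly obtain $v=q^{4}(q^{3}+1)(q^{2}-1)/d$, bound $\gcd(v-1,|H_{0}|)$ by a small polynomial $f(q)$ split according to $d=2$ or $d=4$ (the paper gets $f(q)=q-2$ resp.\ $q-3$), introduce $m$ with $mk=\lambda a f(q)$ to force $m(v-1)+af(q)$ to divide a quantity of size roughly $a^{2}f(q)g(q)$, and conclude $v<Ca^{2}f(q)g(q)$, leaving finitely many $q$ to eliminate by direct arithmetic. The paper's final check is purely numerical and needs no \GAP\ computation, but that is a cosmetic difference only.
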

\begin{proof}
In this case, $|H_0|=q^2(q^2+1)(q^2-1)$, and so by~\eqref{eq:v}, we have that $v=q^4(q^3+1)(q^2-1)/d$, where $d=\gcd(4,q+1)$. It follows from \eqref{eq:k-out} that $k$ divides $2ag(q)$, where $g(q)=q^2(q^4-1)$. Moreover, Lemma~\ref{lem:six}(a) implies that $k$ divides $\lambda(v-1)$.  As $q$ is odd, $d=2$ or $4$. Let $f(q)$ be $q-2$ if $d=2$, and $q-3$ if $d=4$. Then $\gcd(v-1,q^2(q^4-1))$ divides $f(q)$, and so $k$ is a divisor of $\lambda a f(q)$. Suppose that $m$ is a positive integer such that $mk=\lambda a f(q)$. Since now $k(k-1)=\lambda(v-1)$, it follows that  $k= [m(v-1)/a f(q)]+1$, and since $k\mid 4ag(q)$, we must have $m(v-1)+a f(q)\mid 2a^{2}dg(q)$. Therefore, $v <2a^{2}df(q)g(q)$, for $q$ odd, and this does not give rise to any possible parameters.
\end{proof}

\begin{lemma}\label{lem:S6}
The subgroup $H_{0}$ cannot be the subgroups as in the lines {\rm 11-16} of {\rm Table~\ref{tbl:maxes}}.
\end{lemma}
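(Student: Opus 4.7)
The plan is to exploit the fact that in each of lines~11--16 of Table~\ref{tbl:maxes}, the subgroup $H_{0}$ has order bounded by an absolute constant $c_{0}$ (computed directly from the explicit descriptions in Table~\ref{tbl:maxes}; one checks $c_{0} \leq 46080$ throughout), while $|X| = |\PSU_{4}(q)|$ grows like $q^{15}$. Moreover, apart from line~15 (where $q = 3$ is fixed), one has $q = p$ prime in all these lines, so $a = f = 1$ and hence $|\Out(X)| = 2 \gcd(4, q+1) \leq 8$. By Lemma~\ref{lem:New}(b), this yields $|H| \leq 8 |H_{0}|$.

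Combining these observations with Lemma~\ref{lem:six}(c), which gives $k \leq |H|$ and $\lambda v < k^{2}$, and using $v = |X|/|H_{0}|$, I would obtain
\begin{align*}
|X| \leq \lambda v \cdot |H_{0}| < k^{2} |H_{0}| \leq 64 |H_{0}|^{3} \leq 64 c_{0}^{3}.
\end{align*}
Since $|X| \geq q^{15}/4$ for $q \geq 2$, this bounds $q$ by an explicit small constant (of the order $q \leq 20$). Intersecting the resulting range with the congruence conditions recorded in Table~\ref{tbl:maxes} and the primality requirement on $q$ then leaves only a short finite list of candidate pairs $(q, H_{0})$ to examine: essentially $q = 7$ in line~11, $q \in \{3, 11, 19\}$ in line~12, a handful of primes $\leq 19$ in lines~13--14 with $q \equiv 3, 5, 6 \pmod 7$, the single value $q = 3$ in line~15, and $q \in \{5, 11, 17\}$ in line~16.

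For each remaining candidate I would compute $v = |X|/|H_{0}|$ explicitly and enumerate the divisors $k$ of $|H|$ satisfying $2 < k < v - 1$ together with $\lambda := k(k-1)/(v-1) \in \mathbf{Z}_{>0}$ (Lemma~\ref{lem:six}(a)), plus the compatibility conditions $\lambda v < k^{2}$ (Lemma~\ref{lem:six}(c)) and $\gcd(p, v-1) = 1$ (Lemma~\ref{lem:Tits}, applicable since $H$ is not parabolic in these lines). In most cases these arithmetic filters already eliminate every candidate. For any borderline $(v, k, \lambda)$ that survives the numerical sieve, I would fall back to \GAP, constructing the subgroup $H$ via \verb|IsomorphicSubgroups| and \verb|IntermediateSubgroups| exactly as in the proof of Lemma~\ref{lem:SL2:(q-1).2}, computing the subdegrees of the action of $G$ on the right cosets $\mathcal{R}_{H}$ of $H$, and invoking Lemma~\ref{lem:six}(e) (that is, $k \mid \lambda d$ for every subdegree $d$) to discard the surviving parameters. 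The principal obstacle is not conceptual but organisational: one must handle the six-line enumeration uniformly (since values like $q = 3$ or $q = 5$ appear in several lines), and the \GAP\ construction of the exotic $\mathcal{S}$-subgroups inside $G$ demands care to ensure that the correct conjugacy class of maximal subgroup is realised.
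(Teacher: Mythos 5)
Your proposal follows essentially the same route as the paper: both derive the bound $|X|\leq |\Out(X)|^{2}\cdot|H_{0}|^{3}$ from Lemmas~\ref{lem:New}(b) and~\ref{lem:six}(c), use the bounded order of $H_{0}$ and the congruence/primality conditions on $q$ to reduce to a short finite list of candidates, and then eliminate each by checking that no divisor $k$ of $|\Out(X)|\cdot|H_{0}|$ makes $k(k-1)/(v-1)$ a positive integer. The only differences are cosmetic (your \GAP\ fallback turns out to be unnecessary, and the claim $|X|\geq q^{15}/4$ fails marginally at $q=3$, though this does not affect the conclusion).
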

\begin{proof}
By Lemmas~\ref{lem:New}(b) and \ref{lem:six}(c), we have that $|X|\leq |\Out(X)|^{2}\cdot |H_{0}|^{3}$. Therefore, the lines {\rm 13-14} can be ruled out. For the remaining cases, this inequality holds only for $q$ listed as in Table~\ref{tbl:S6}. However, for such $q$, no divisor $k\geq 4$ of $|\Out(X)|\cdot |H_{0}|$ exists such that $k(k-1)/(v-1)$ is a positive integer, which is a contradiction.
\begin{table}[h]
  \centering\scriptsize
  \caption{Possible cases in Lemma~\ref{lem:S6}\label{tbl:S6}}
\begin{tabular}{ll}
   \noalign{\smallskip}\hline\noalign{\smallskip}
  \multicolumn{1}{c}{$H_{0}$} &\multicolumn{1}{l}{$q$}\\
   \hline\noalign{\smallskip}
  $^{\hat{}}(4\circ 2^{1+4})^{\cdot}S_{6}$ & $7$\\
  $^{\hat{}}(4\circ 2^{1+4})\cdot A_{6}$ &$3$ \\
  $^{\hat{}}{4_{2}^{\cdot}}\PSL_{3}(4)$ & $3$ \\
  $^{\hat{}}d\circ 2^{\cdot}\PSU_{4}(2)$ & $5,11$ \\
   \hline\noalign{\smallskip}
\end{tabular}
\end{table}
\end{proof}

\begin{proof}[\rm \textbf{Proof of Theorem~\ref{thm:main}}]
The proof of the main result follows immediately from Lemmas \ref{lem:su2}--\ref{lem:S6}.
\end{proof}

\section*{Acknowledgements}

The authors would like to thank anonymous referees for providing us helpful and constructive comments and suggestions. They would also like to mention that their names has been written in alphabetic order.

\section*{References}


\end{document}